\providecommand{\keywords}[1]
{
  \small	
  \textbf{\textit{Keywords---}} #1
}
\providecommand{\MSC}[1]
{
  \small	
  \textbf{\textbf{Mathematics Subject Classification }} #1
}
\newtheorem{definition}{Definition}[section]
\newtheorem{theorem}{Theorem}[section]
\newtheorem{lemma}{Lemma}[section]
\newtheorem{remark}{Remark}[section]
\theoremstyle{definition}
\newtheorem{example}{Example}[section]}
\newtheorem{assumption}{Assumption}[section]
\newcommand{\eqnum}{\refstepcounter{equation}\textup{\tagform@{\theequation}}}
\newcommandx{\info}[2][1=]{\todo[linecolor=OliveGreen,backgroundcolor=OliveGreen!25,bordercolor=OliveGreen,#1]{#2}}
\providecommand{\abs}[1]{\lvert#1\rvert}
\DeclareMathOperator*{\essinf}{ess\,inf}
\DeclareMathOperator*{\esssup}{ess\,sup}
\DeclareMathOperator{\spn}{span}
\title{Analysis of RHC for stabilization of nonautonomous parabolic equations under uncertainty}
\author{Behzad Azmi\thanks{Department of Mathematics and Statistics, University of Konstanz, D-78457 Konstanz, Germany.  E-mail: behzad.azmi@uni-konstanz.de}\and
Lukas Herrmann\thanks{Johann Radon Institute for Computational and Applied Mathematics (RICAM), Austrian Academy of Sciences,  Altenbergerstra\ss e 69, A-4040 Linz, Austria. E-mail: lukas.herrmann@ricam.oeaw.ac.at}\and
Karl Kunisch\thanks{Johann Radon Institute for Computational and Applied Mathematics (RICAM), Austrian Academy of Sciences and Department of Mathematics,  University of Graz, Heinrichstra\ss e 36, A-8010 Graz,  Austria. E-mail:karl.kunisch@uni-graz.at}}
\date{}                     
\begin{document}
\date{\today}
\maketitle

\begin{abstract}
 Stabilization of a class of time-varying parabolic equations with uncertain input data using Receding Horizon Control (RHC) is investigated.  The diffusion coefficient and the initial function are prescribed as random fields. We consider both cases, uniform and log-normal distributions of the diffusion coefficient.
 The controls are chosen to be finite dimensional  and enter into the system  as a linear combination of finitely many indicator functions (actuators)  supported in open subsets of the spatial domain.    Under suitable regularity assumptions, we study the expected (averaged) stabilizability of the RHC-controlled system with respect to the number of actuators.  An upper bound is also obtained for the failure probability of RHC in relation to the choice of the number of actuators and parameters in the equation.

\end{abstract}

\keywords{receding horizon control,   random evolution PDEs,  uncertainty,  
averaged stabilizability,  random fields,  nonautonomous parabolic equations}

\MSC{93C20 · 35R60 · 93D20 · 93E03}

\section{Introduction}
Mathematical models arising  in real-world applications  are typically affected by uncertainties that can lead to significant differences between the real systems response and the corresponding deterministic mathematical models. Therefore it is of great interest  for applications to include uncertainty in these  models and quantify its effect on the predicted quantities of interest.  Such uncertainty may reflect our ignorance or inability to properly characterize all input parameters of the mathematical
model, and it may also describe an intrinsic variability of the physical system, see e.g., \cite{chiba_stochastic_2009,chiba_stochastic_2008}.  Probability theory provides a natural framework to describe and deal with such uncertainties which are characterized  as random variables or more generally random fields.

 We investigate stabilization of the controlled systems governed by  the following  linear parabolic equation utilizing  the receding horizon control (RHC) strategy
\begin{equation}
\tag{CS}
\label{CS}
\begin{cases}
\partial_t y- \nabla \cdot ( \nu(\omega)\nabla y)+ a(t)y+  \nabla \cdot (b(t)y) = \sum^{N}_{i =1} u_i(t) \mathbf{1}_{O_i}   &   (t,x) \in (0,\infty)\times D,\\
y =0   & (t,x)  \in  (0,\infty) \times \partial D,\\
y(0)=y_0(\omega) &  x \in D,
\end{cases}
\end{equation}
where $D\subset \mathbb{R}^n$ is a bounded rectangular with  boundary $\partial D$  and the  functions $\mathbf{1}_{O_i}$,  represent the actuators. They are modelled as the characteristic functions related to open sets $O_i \subset D$ for  $i= 1, \dots, N$.   The reaction term $a(t) = a(t, x)$ and the convection term $b(t) = b(t, x)$ are, respectively,  real- and $\mathbb{R}^n$-valued functions of $(t, x) \in (0, \infty) \times D$.   Further,  the diffusion coefficient (the convective heat transfer coefficient)  $\nu(\omega) =\nu(\omega,x)$ and the initial function  $y_0(\omega) = y_0(x,\omega)$  are very difficult to measure in practice and,  hence,  they are affected by a certain amount of uncertainty  with  $x \in D$  and $\omega \in \Omega$.   These uncertainty inputs are described as random fields defined on the  complete probability space $(\Omega, \mathcal{F}, \mathbb{P})$,  where $\Omega$ denotes the set of outcomes,  $\mathcal{F}$ is the associated $\sigma$-algebra of events,  and $\mathbb{P} : \mathcal{F} \to [0, 1]$ is a probability measure.



 In this work,  we aim at  deriving a stabilizing control that is robust with respect to  the perturbations of the dynamics caused by all possible realizations of the random parameter $\omega$. For this purpose we consider the notion of averaged stability and verify  that the expected value of the distance of the state to the steady state with respect to the random parameter converges asymptotically to zero.  More concretely,  the control objective is to find a (spatially) finite-dimensional control  $ \mathbf{u} \in L^2((0,\infty);U)$   for which
\begin{equation*}
\mathbb{E} \left[ \|y(t)\|^2_{L^2(D; \mathbb{R})} \right] := \int_{\Omega} \|y(t,\omega )\|^2_{L^2(D; \mathbb{R})} d \mathbb{P}(\omega)   \to 0     \quad  \text{ as } \quad  t \to \infty 
\end{equation*}
holds.  Here we will consider the both cases deterministic $U=\mathbb{R}^N$ and stochastic $U= L^2_{\mathbb{P}}(\Omega;\mathbb{R})\otimes \mathbb{R}^N$ controls.  The stabilizing control $\mathbf{u}$ is computed by a receding horizon framework.  In this framework,  the current control action is obtained by minimizing a performance index  defined on a finite time interval,  ranging from the current time $t_0$  to some future time $t_0+T$, with $T \in (0, \infty]$ and $t_0\in(0,\infty)$.  Here we set
\begin{equation}
\label{e1}
J_{T}(\mathbf{u};t_0,y_0):= \frac{1}{2}\int_{t_0}^{t_0+T} \mathbb{E}\left[\ell(t,y(t))\right]\,dt +\frac{\beta}{2}\int^{t_0+T}_{t_0} \|\mathbf{u}(t)\|_U^2  dt,
\end{equation}
for $\beta\geq 0$,  $\ell:  \mathbb{R}_{\geq 0}  \times H^1_0(D;\mathbb{R}) \to \mathbb{R}_{\geq 0}$  satisfying $\ell(t,y)\geq \alpha_{\ell} \|y\|^2_{L^2(D;\mathbb{R})}$ with $\alpha_{\ell}>0$,  and $\ell(t,0) = 0$. As an example one may consider $\ell(t, y)=\alpha_{\ell} \|y\|^2_{L^2(D;\mathbb{R})}$.
 Then,  the stabilization of the control system \eqref{CS} can be formulated as the following infinite-horizon optimal control problem
\begin{equation}
\label{opinf} \tag*{$OP_{\infty}(y_0)$}
\min\{ J_{\infty}(\mathbf{u};0,y_0) \mid (y,\mathbf{u}) \text{ satisfies } \eqref{CS},  \mathbf{u} \in   L^2((0,\infty);U)\}.
\end{equation}
As we will show,  the receding horizon approach delivers a suboptimal approximation to the solution of \ref{opinf}.  This approximation  is constructed by concatenation of a sequence of finite horizon optimal controls defined on overlapping intervals covering $(0, \infty)$.  These finite horizon problems  have the following form.  For a given initial time $\bar{t}_0$,  initial functions $ \bar{y}_0 =  \bar{y}_0(\omega,x)$,  and prediction horizon $T$ consider
\begin{align}
\label{optT}
\tag*{$OP_{T}(\bar{t}_0,\bar{y}_0)$}
&\min_{\mathbf{u}\in   L^2((\bar{t}_0,\bar{t}_0+T);U)}J_T(\mathbf{u};  \bar{t}_0, \bar{y}_0)\\\label{e41}
\text{ s.t } &\begin{cases}
\partial_t y- \nabla \cdot ( \nu(\omega)\nabla y)+ a(t)y+ \nabla \cdot (b(t)y) = \sum^{N}_{i =1} u_i(t) \mathbf{1}_{O_i}   &   (t,x) \in (\bar{t}_0,\bar{t}_0+T)\times D,\\
y =0   & (t,x)  \in  (\bar{t}_0,\bar{t}_0+T) \times \partial D,\\
y(\bar{t}_0)=\bar{y}_0(\omega) &  x \in D.
\end{cases}
\end{align}
In the receding horizon framework,  we define sampling instances $t_k :=k\delta$,  for $k=0,1,2,\dots$,  and for a chosen sampling time $\delta >0$.  Then,  at every current sampling instance $t_k$   sampling time $\delta >0$.  Then,  at every current sampling instance $t_k$ with state $ y_{rh}(t_k) \in  L^2_{\mathbb{P}}(\Omega;\mathbb{R}) \otimes L^2(D;\mathbb{R})$,  an  open-loop optimal control problem $OP_{T}(t_k, y_{rh}(t_k))$ is solved over a finite prediction horizon $[t_k,t_k+T]$ for an appropriate prediction horizon $T>\delta$.  Then,  the associated optimal control is applied to steer the system from time $t_k$ with the initial state $ y_{rh}(t_k) \in  L^2_{\mathbb{P}}(\Omega;\mathbb{R}) \otimes L^2(D;\mathbb{R})$ until time $t_{k+1}:=t_k+\delta$ at which point,  a new measurement of the state    $ y_{rh}(t_{k+1}) \in  L^2_{\mathbb{P}}(\Omega;\mathbb{R}) \otimes L^2(D;\mathbb{R})$,  is assumed to be available.  The process is repeated starting from  this new measured state:  we obtain a new optimal control and a new predicted state trajectory by shifting the prediction horizon forward in time. The sampling time $\delta$ is the time period between two sample instances. Throughout, we denote the receding horizon state- and control variables  by $y_{rh}(\cdot,\cdot)$ and $\mathbf{u}_{rh}(\cdot)$, respectively. Also,  $(y_T^*(\cdot,\cdot ;\bar{t}_0, \bar{y}_0), \mathbf{u}^*_T(\cdot; \bar{t}_0, \bar{y}_0))$ stands for the optimal state and control of the optimal control problem with finite time horizon $T$,  and initial function  $\bar{y}_0$ at initial time $\bar{t}_0$. This is summarized in  Algorithm \ref{RRHA}.
\begin{algorithm}[htbp]
\caption{Robust RHC($\delta,T$)}\label{RRHA}
\begin{algorithmic}[1]
\REQUIRE{The sampling time $\delta$,  the prediction horizon $T\geq \delta$,   and the initial state $y_0$}
\ENSURE{The stability of RHC~$\mathbf{u}_{rh}$.}
\STATE Set~$(\bar{t}_0,\bar{y}_0):=(0, y_0)$  and  $y_{rh}(0) =y_0 $;
\STATE Find the solution~$(y_T^*(\cdot;\bar{t}_0, \bar{y}_0)), \mathbf{u}^*_T(\cdot;\bar{t}_0, \bar{y}_0))$
over the time horizon~$(\bar{t}_0,\bar{t}_0+T)$ by solving the  open-loop problem~\ref{optT};
\STATE For all $\tau \in [\bar{t}_0,\bar{t}_0+\delta)$,  set $y_{rh}(\tau)=y^*_T(\tau;\bar{t}_0,\bar{y}_0)$ and $\mathbf{u}_{rh}(\tau)=\mathbf{u}^*_T(\tau; \bar{t}_0, \bar{y}_0)$;
\STATE Find a measurement  $ y_{rh}(\bar{t}_0 +\delta;\bar{t}_0,\bar{y}_0)$ of the state at time $\bar{t}_0+ \delta$;
\STATE Update: $(\bar{t}_0,\bar{y}_0)  \leftarrow (\bar{t}_0 +\delta,  y_{rh}(\bar{t}_0 +\delta; \bar{t}_0,\bar{y}_0))$;
\STATE Go to Step  2;
\end{algorithmic}
\end{algorithm}

Concerning the literature,  there is a growing interest in partial differential equations (PDEs) that involve some uncertainty.  So far, there are only a few papers investigating parabolic PDEs with random coefficients.  Here we can quote e.g., \cite{MR4274843,MR4246090, MR3115458,MR2583868,MR3022204}.
Concerning control and stabilization, which are well-studied for deterministic infinite-dimensional systems, and stochastic systems with the stochastic terms appearing in an affine manner, 
 there is little research on infinite-dimensional systems under uncertainty.  Among them,  we can mention \cite{MR4335837, MR3909811,MR3465808}  in the context of controllability results and   \cite{guth2022, MR3064588,MR3888962,MR3548188}  for optimal control problems.  To  our knowledge,  RHC has not yet been studied for  control systems with uncertainty inputs.  In this project,  we take a step in this direction and, relying on theoretical results in \cite{MR4022734},  we investigate the performance and stability of the receding horizon framework for \cref{CS} with both uniformly bounded and log-normally distributed random diffusions $\nu$.  For each case,  separately,  this involves  investigating the well-posedness of the state,  the stabilizability of the controlled system by (spatially) finite-dimensional controls,  and deriving continuity-  and observability-type of inequalities.  We also provide an upper bound for the failure probability for the receding horizon  framework depending on the choice of diffusion parameter $\nu$,   reaction and convection terms $a$ and $b$,   and the number $N$ of actuators.

The rest of the paper is organized as follows.  We start the next section by introducing the notation which is  used throughout the paper.  In Section 3, we consider time-varying parabolic equations with uniformly bounded random diffusion.  Under appropriate  assumptions, we study the well-posedness of the state equation, the stabilizability of the controlled system, and the stability of the receding horizon framework.   At the end of that section, we derive an upper bound for the failure probability of RHC.  In the forth section, we discuss the analogous  questions  for the case of  log-normally distributed random diffusion.

\section{Notation and  preliminaries}
 Let  Banach spaces~$X$ and~$Y$ be given. We write $X\xhookrightarrow{} Y$ if the inclusion $X\subseteq Y$ is continuous.
The space of continuous linear mappings from~$X$ into~$Y$ is denoted by~$\mathcal{L}(X,Y)$.  We also
write~$\mathcal{L}(X)\coloneqq\mathcal{L}(X,X)$.
The continuous dual of~$X$ is denoted by~$X'\coloneqq\mathcal{L}(X,  \mathbb{R})$.
The adjoint of an operator $L\in\mathcal{L}(X,Y)$ will be denoted with  $L^*\in\mathcal{L}(Y',X')$.

Let a Hilbert space  $H$ endowed with scalar product $(\cdot,\cdot)_H$ be given. Then the orthogonal complement to a given subset $B \subset H$ is denoted  by $B^{\perp}:=\{h \in H : (h,s)_H = 0\quad  \forall   s\in B \}$.

For any  two closed subspaces $\mathcal{F}$ and $\mathcal{G}$ of  the Hilbert space  $H$ satisfying $H = \mathcal{F} \oplus  \mathcal{G}$,   we define by $P^{\mathcal{G}}_{\mathcal{F}} \in \mathcal{L}(H, \mathcal{F})$ the oblique projection in $H$ onto $\mathcal{F}$   along   $\mathcal{G}$.  That is,  for every $h \in H$ if  we consider the unique decomposition $h = h_{\mathcal{F}}+h_{\mathcal{G}}$  with $h_{\mathcal{F}} \in \mathcal{F}$ and $h_{\mathcal{G}} \in \mathcal{G}$,  we have  $P^{\mathcal{G}}_{\mathcal{F}} h := h_{\mathcal{F}}$.  Then,  clearly,   $P^{\mathcal{F}^{\perp}}_{\mathcal{F}}$  is the orthogonal projection in $H$  onto $\mathcal{F}$ which is denoted by $P_{\mathcal{F}}$.

 For given Hilbert spaces $H_1$ and $H_2$,  we use the notation  $H_1 \otimes H_2$ for the tensor product of $H_1$ with $H_2$.

 We also consider the spaces $V : = H^1_0(D; \mathbb{R})$,   $V' := H^{-1}(D;\mathbb{R})$,  and $H:=L^2(D; \mathbb{R})$ endowed with their usual norms.  Then, for every open interval $(s_1, s_2)\subset \mathbb{R}_{\geq 0}$, we can define
\[ W(s_1,s_2):=\{ v \in L^2((s_1,s_2); V) :  \partial_t v \in L^2((s_1,s_2); V') \}, \]
 endowed with the norm
  $ \| v\|_{W(s_1,s_2)} :=  \left( \|v\|^2_{L^2((s_1,s_2); V)} + \| \partial_t v\|^2_{L^2((s_1,s_2); V')}  \right)^{\frac{1}{2}}$.

For the probability space $(\Omega, \mathcal{F}, \mathbb{P})$,  a Banach space $X$,  and $p\in [ 1,  \infty ]$,  we denote by  $L^p_{\mathbb{P}} (\Omega;X)$ the Lebesgue-Bochner space,  composed of all strongly measurable function $v:\Omega \to X$  whose norm is defined by
\begin{equation*}
\|v \|_{L^p_{\mathbb{P}}(\Omega;X)} := \begin{cases}  \left(\int_{\Omega} \| v(\cdot, \omega)\|^p_X d\mathbb{P}(\omega)   \right)^{\frac{1}{p}} &      p<\infty, \\
\esssup_{\omega \in \Omega} \| v(\cdot, \omega)\|_X & p = \infty.
      \end{cases}
\end{equation*}
We also  assume that $L^2_{\mathbb{P}} (\Omega; \mathbb{R})$  is a separable Hilbert space.  For this assumption it suffices to assume that $(\Omega , \mathcal{F} , \mathbb{P})$ is separable  see e.g., \cite[Theorem II.10]{MR751959}.  Then,  if $p = 2$ and $X$ is a separable Hilbert space,   the Bochner space $L^2_{\mathbb{P}} (\Omega;X)$ is isomorphic to the tensor product space $L^2_{\mathbb{P}} (\Omega; \mathbb{R})\otimes X$,  that is $L^2_{\mathbb{P}} (\Omega;X) \cong   L^2_{\mathbb{P}} (\Omega; \mathbb{R} )\otimes X$,  see e.g., \cite[Theorem 4.13]{MR2759829}.   

For the sake of convenience,  we abbreviate  $V_{\mathbb{P}} : =  L^2_{\mathbb{P}}(\Omega;H^1_0(D;\mathbb{R}))$,   $V_{\mathbb{P}}' := L^2_{\mathbb{P}}(\Omega;H^{-1}(D;\mathbb{R}))$, $H_{\mathbb{P}}:=L^2_{\mathbb{P}}(\Omega; L^2(D;\mathbb{R}))$ and
$U^{N}_{\mathbb{P}}:=L^2_{\mathbb{P}}(\Omega; \mathbb{R}^N)$.  By identifying  $H_{\mathbb{P}}$ with its dual 
we obtain a Gelfand triple $V_{\mathbb{P}}\hookrightarrow H_{\mathbb{P}} \hookrightarrow V'_{\mathbb{P}} $ of separable Hilbert spaces with dense injections.  Finally,   for every open interval $(s_1, s_2)\subset \mathbb{R}_{\geq 0}$, we consider the space $W_{\mathbb{P}}(s,t)$ defined
\[ W_{\mathbb{P}}(s_1,s_2):=\{ v \in L^2((s_1,s_2); V_{\mathbb{P}}) :  \partial_t v \in L^2((s_1,s_2); V'_{\mathbb{P}}) \}, \]
and endowed with the norm $ \| v\|_{W_{\mathbb{P}}(s_1,s_2)} :=  \left( \|v\|^2_{L^2((s_1,s_2); V_{\mathbb{P}})} + \| \partial_t v\|^2_{L^2((s_1,s_2); V'_{\mathbb{P}})}  \right)^{\frac{1}{2}}$.  It is well known
that $W_{\mathbb{P}}(s_1,s_2) \hookrightarrow C([s_1,s_2]; H_{\mathbb{P}})$.  Further, due to the fact that $L_{\mathbb{P}}^2(\Omega;\mathbb{R})$  is  separable,  we can write for any Hilbert  space $X$ that
\begin{equation*}
\begin{split}
L^2_{\mathbb{P}}(\Omega;\mathbb{R}) & \otimes  L^2((s_1,s_2);X)  \cong L^2_{\mathbb{P}}(\Omega;L^2((s_1,s_2);X)) \cong   L^2(\Omega\times(s_1,s_2) ; X)\\
& \cong L^2((s_1,s_2) ; L^2_{\mathbb{P}}( \Omega; X)) \cong  L^2((s_1,s_2);\mathbb{R}) \otimes  L^2_{\mathbb{P}}( \Omega; X).
\end{split}
\end{equation*}
Hence,  we can conclude
\begin{equation}
\label{e48}
L^2_{\mathbb{P}}(\Omega;\mathbb{R})  \otimes  W(s_1,s_2) \cong  W_{\mathbb{P}}(s_1,s_2).
\end{equation}
In the following,  we define the finite- and infinite-horizon value functions.  These will be used frequently in the analysis of RHC.
\begin{definition}
For any $y_0 \in  H_{\mathbb{P}}$   the infinite-horizon value function $V_{\infty}: H_{\mathbb{P}}\to \mathbb{R}_{\geq 0}$ is defined by
\begin{equation*}
V_{\infty}(y_0):= \min_{ \mathbf{u} \in L^2((0,\infty);U)}\{J_{\infty}(\mathbf{u};0,y_0) \text{ subject to  \eqref{CS}} \}.
\end{equation*}
Similarly, for every $(T, \bar{t}_0, \bar{y}_0) \in \mathbb{R}^2_{\geq 0} \times H_{\mathbb{P}}$,  the finite-horizon value function $V_{T}: \mathbb{R}_{\geq 0} \times H_{\mathbb{P}} \to \mathbb{R}_{\geq 0}$ is defined by
\begin{equation*}
V_{T}(\bar{t}_0, \bar{y}_0):= \min_{\mathbf{u} \in L^2(( \bar{t}_0, \bar{t}_0+T);U)}\{J_{T}(\mathbf{u};\bar{t}_0, \bar{y}_0 ) \text{ subject to \eqref{e41}} \}.
\end{equation*}
\end{definition}

\section{Parabolic PDEs with uniform random diffusion}
In this section we are concerned with the case when the diffusion coefficient is uniformly bounded away  from zero and from above.  This allows us to use  the  weak  formulation  directly. Throughout this section,   we impose the following conditions:
\begin{assumption}\label{assump}
We assume that:
\begin{itemize}
\item[A1:] There are random variables $\nu_{\min}$,  $\nu_{\max}$,  and  constants $\overline{\nu} $,  $\underline{\nu}$  such that
\begin{equation}
\label{e3}
0<\underline{\nu}  \leq \nu_{\min}(\omega)   \leq \nu(\omega,x) \leq  \nu_{\max}(\omega)  \leq   \overline{\nu} < \infty  \quad  \text{ for a.e.  }  x \in D  \text{ and }  \omega \in \Omega \text{ a.s.}.
\end{equation}
\item[A2:] For the reaction parameter $a$  and convection vector $b$,  we impose
 \begin{equation}
 \label{e56}
 \tag{RA}
 a \in L^{\infty}((0, \infty);  L^r(D;\mathbb{R})) \text{ with } r\geq n :=dim(D), \text{ and } b \in L^{\infty}((0,\infty)\times D ; \mathbb{R}^n),
 \end{equation}
   and set $\mathcal{N}(a,b):= \|a\|_{L^{\infty}((0,\infty);L^r(D;\mathbb{R}))}+\| b\|_{L^{\infty}((0,\infty)\times D ; \mathbb{R}^n)}$.
 \end{itemize}
\end{assumption}
We mention the two following examples for the diffusion $\nu$ satisfying A1.
\begin{example}
\label{Examp1}
 The case of the (truncated) log-normal fields,  i.e.,
\begin{equation}
\label{log-N}
 \nu(\omega,x) = \nu_{0}(x)+\exp(\sum^M_{j= 1}z_j(\omega) \psi_j(x)),
\end{equation}
where  $\psi_j \in L^{\infty}(D;\mathbb{R})$ for $j = 0,\dots,M$ and $\nu_0 \in L^{\infty}(D;\mathbb{R})$ with  $\essinf_{x \in D} \nu_0(x) = \underline{\nu}> 0$ .   The random variables $z_j: \Omega \to \mathbb{R}$ have zero means, they are pairwise uncorrelated,  and  they are  truncated  at some large enough lower and upper bounds,  see e.g.,  \cite[page 25]{martinez-frutos_optimal_2018} for more details.  For every $z  = (z_1,\dots,z_M) $,  the following quantities are assumed to be well defined.
\begin{equation}
\begin{split}
\nu_{\max}(\omega)  &= \esssup_{x\in D}\nu_{0}(x)+\exp(\sum^M_{j= 1}\abs{z_j(\omega)}\| \psi_j\|_{L^{\infty}(D;\mathbb{R})}), \\
\nu_{\min}(\omega)  & =  \essinf_{x \in D}\nu_{0}(x)+\exp( - \sum^M_{j= 1}\abs{z_j (\omega)}\| \psi_j\|_{L^{\infty}(D;\mathbb{R})}).
\end{split}
\end{equation}
Since the ranges of $z_j$ for $j = 1, \dots,M$ are bounded,  we have  \eqref{e3}  for  numbers  $\infty > \overline{\nu}\geq\underline{\nu}>0$.
\end{example}
\begin{example}
\label{Examp2}
 We can also consider the coefficient  $\nu$ to be characterized by a sequence of scalar random variables $\{ z_j  \}_{j \geq 1}$ with
\begin{equation}
\label{e8}
\nu(\omega,x)  = \nu_0(x) + \sum^{\infty}_{j= 1}z_j(\omega)\psi_j(x),
\end{equation}
where $\psi_j \in L^{\infty}(D;\mathbb{R})$ for $j\geq 1$,  and $z_j : \Omega \to \mathbb{R}$ for  $j = 1,2,\cdots$  are independent random variables which are distributed identically and uniformly in $[-1, 1]$ such that the range of  $z_j$ is in $[-1, 1]$.  Then all realizations of the random vector $z= (z_1,z_2,\dots)$ are supported in the cube $ [-1, 1]^{\mathbb{N}}$.    Further,  with $\nu^* := \essinf_{x \in D} \nu_{0}(x)$ and some $\kappa > 0$,  the functions $\psi_j$ are supposed to satisfy
\[
\sum^{\infty}_{j= 1} \| \psi_j  \|_{L^{\infty}(D;\mathbb{R})}  \leq  \frac{\kappa}{1+\kappa} \nu^* .
\]
This assumption implies that the fluctuations (resp., deviations) from mean of the random coefficient $\nu(x, \omega)$ in \eqref{e8} are dominated by the mean field, i.e., that they are small with respect to the deterministic mean field.  Then, we have
\begin{equation}
\begin{split}
\nu_{\max}(\omega)  &= \esssup_{x\in D} \nu_{0}(x)+\sum^{\infty}_{j= 1}\abs{ z_j(\omega)} \| \psi_j  \|_{L^{\infty}(D;\mathbb{R})}, \\
\nu_{\min}(\omega)  & =   \nu^*-\sum^{\infty}_{j= 1}\abs{ z_j(\omega)} \| \psi_j  \|_{L^{\infty}(D;\mathbb{R})},
\end{split}
\end{equation}
and the inequality in the right hand side of \eqref{e3} holds with $\underline{\nu}:= \nu^*-\frac{\kappa}{1+\kappa} \nu^* = \frac{1}{1+\kappa} \nu^*$.
\end{example}

\subsection{Well-posedness of  state equation}
We start with the well-posedness of \eqref{CS}.   In this regard,  we consider for $\omega \in \Omega$ a.s., the following auxiliary random linear parabolic equation
\begin{equation}
\label{e17}
\begin{cases}
\partial_t y- \nabla \cdot ( \nu(\omega,x)\nabla y)+ a(t,x)y+  \nabla \cdot (b(t,x)y) = f(t,x,\omega)  &   (t,x) \in (t_0,t_0+T)\times D,\\
y =0   & (t,x)  \in  (t_0,t_0+T) \times \partial D,\\
y(t_0)=y_0(\omega, x) &  x \in D,
\end{cases}
\end{equation}
and define the following notion of weak solution.
\begin{definition}
Let  $(T,t_0, y_0, f ) \in   \mathbb{R}^2 \times  H_{\mathbb{P}} \times L^2((t_0,t_0+T);V'_{_{\mathbb{P}}}) $ be given.  Then,  a random field  $y \in W_{\mathbb{P}}(t_0,t_0+T)$ is referred to as a weak solution of \eqref{e17},  if it satisfies
\begin{equation}
\label{e7}
\begin{split}
&\int^{t_0+T}_{t_0}\langle \partial_t y(t),\varphi(t)\rangle_{V'_{\mathbb{P}},V_{\mathbb{P}}}\,dt+  \int^{t_0+T}_{t_0} \int_{\omega}\int_D  \nu \nabla y(t)\cdot \nabla \varphi(t)  \,dx \, d \mathbb{P}(\omega) \,dt\\ &+ \int^{t_0+T}_{t_0}\langle a(t)y(t),\varphi(t) \rangle_{V'_{\mathbb{P}},V_{\mathbb{P}}}\,dt - \int^{t_0+T}_{t_0} \int_{\Omega}\int_D y(t)b(t) \cdot  \nabla \varphi  \,dx \, d \mathbb{P}(\omega) \,dt \\ &  =\int^{t_0+T}_{t_0} \langle f(t),\varphi \rangle_{V'_{\mathbb{P}},V_{\mathbb{P}}}\,dt \quad \text{ for all }\varphi \in L^2((t_0,t_0+T); V_{\mathbb{P}}),
\end{split}
\end{equation}
and $y(t_0)=y_0$ is satisfied in $H_{\mathbb{P}}$.  Here we use  $\langle \cdot, \cdot  \rangle_{V'_{\mathbb{P}},V_{\mathbb{P}}} :=  \mathbb{E} \left[  \langle \cdot, \cdot  \rangle_{V',V} \right]$.
\end{definition}
In the following we present the existence result and various a-priori estimates  for the weak solution of \eqref{e17}.   These estimates will be used frequently in the sequel.
\begin{theorem}
\label{Theo2}
For every multiple  $(T,t_0, y_0, f ) \in   \mathbb{R}_{\geq 0}^2 \times  H_{\mathbb{P}} \times L^2((t_0,t_0+T);V'_{\mathbb{P}})$ equation \eqref{e17} admits a unique weak random field $y \in W_{\mathbb{P}}(t_0,t_0+T)$ satisfying the following estimates
\begin{equation}
\label{e5}
\|y\|^2_{C([t_0,t_0+T];H_{\mathbb{P}})} + \| y \|^2_{W_{\mathbb{P}}(t_0,t_0+T)} \leq c_1\left( \|y_0\|^2_{H_{\mathbb{P}}} + \| f\|^2_{L^2((t_0,t_0+T);V'_{\mathbb{P}})} \right), 
\end{equation}
with $c_1$ depending on  $(T,  \overline{\nu}, \underline{\nu},a,b,D)$.  Moreover, we have the following observability inequality
\begin{equation}
\label{e6}
\|y_0\|^2_{H_{\mathbb{P}}} \leq  c_{2} \left(1 + T^{-1} +  \mathcal{N}(a,b)\right)\|y\|^2_{L^2((t_0,t_0+T);V_{\mathbb{P}})}+ \|f\|^2_{L^2((t_0,t_0+T);V'_{\mathbb{P}})},
\end{equation}
with  $c_2$ depending only on $(T,  \overline{\nu}, \underline{\nu},D)$.
\end{theorem}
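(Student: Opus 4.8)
The plan is to recast \eqref{e17} as an abstract linear parabolic Cauchy problem on the Gelfand triple $V_{\mathbb{P}}\hookrightarrow H_{\mathbb{P}}\hookrightarrow V'_{\mathbb{P}}$ and to apply the classical Lions existence and uniqueness theory for abstract parabolic equations. To this end, for a.e.\ $t$ introduce the bilinear form $\mathfrak{a}(t;\cdot,\cdot)\colon V_{\mathbb{P}}\times V_{\mathbb{P}}\to\mathbb{R}$,
\begin{equation*}
\mathfrak{a}(t;y,\varphi):=\mathbb{E}\Big[\int_D\nu\,\nabla y\cdot\nabla\varphi\,dx+\langle a(t)y,\varphi\rangle_{V',V}-\int_D y\,b(t)\cdot\nabla\varphi\,dx\Big],
\end{equation*}
so that \eqref{e7} is precisely the weak form of $\partial_t y+\mathcal{A}(t)y=f$ in $V'_{\mathbb{P}}$ with $y(t_0)=y_0$ in $H_{\mathbb{P}}$, where $\mathcal{A}(t)\in\mathcal{L}(V_{\mathbb{P}},V'_{\mathbb{P}})$ is the operator induced by $\mathfrak{a}$. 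Since $L^2_{\mathbb{P}}(\Omega;\mathbb{R})$ is separable, $(V_{\mathbb{P}},H_{\mathbb{P}},V'_{\mathbb{P}})$ is a genuine Gelfand triple of separable Hilbert spaces, so the entire deterministic machinery transfers verbatim to this ``$\mathbb{P}$-averaged'' triple; in particular the abstract theorem delivers the solution directly in $W_{\mathbb{P}}(t_0,t_0+T)$, no separate measurability argument is needed, and $W_{\mathbb{P}}(t_0,t_0+T)\hookrightarrow C([t_0,t_0+T];H_{\mathbb{P}})$ has already been recorded.

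The two properties of $\mathfrak{a}$ to verify are \emph{boundedness} and a \emph{G\aa rding inequality}. For boundedness, the diffusion term is controlled by $\overline{\nu}\,\|y\|_{V_{\mathbb{P}}}\|\varphi\|_{V_{\mathbb{P}}}$ via A1, the convection term by $\|b\|_{L^{\infty}((0,\infty)\times D)}\,\|y\|_{H_{\mathbb{P}}}\|\varphi\|_{V_{\mathbb{P}}}$, and the reaction term by $C_D\,\|a(t)\|_{L^r(D)}\,\|y\|_{V_{\mathbb{P}}}\|\varphi\|_{V_{\mathbb{P}}}$ via H\"older's inequality with exponents $(r,\tfrac{2r}{r-1},\tfrac{2r}{r-1})$ and the Sobolev embedding $H^1_0(D)\hookrightarrow L^{2r/(r-1)}(D)$, which holds precisely because $r\ge n$; hence $|\mathfrak{a}(t;y,\varphi)|\le C_D(\overline{\nu}+\mathcal{N}(a,b))\,\|y\|_{V_{\mathbb{P}}}\|\varphi\|_{V_{\mathbb{P}}}$. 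For the G\aa rding inequality, the diffusion term is bounded below by $\underline{\nu}\,\|\nabla y\|_{H_{\mathbb{P}}}^2$; the reaction and convection terms are each dominated, after a Gagliardo--Nirenberg interpolation (whose interpolation exponent is $\le\tfrac12$ because $r\ge n$) and Young's inequality, by $\tfrac{\underline{\nu}}{4}\|\nabla y\|_{H_{\mathbb{P}}}^2+C\|y\|_{H_{\mathbb{P}}}^2$ with $C=C(\underline{\nu},\mathcal{N}(a,b),D)$. Together with the Poincar\'e inequality on the bounded set $D$ this yields $\mathfrak{a}(t;y,y)\ge\alpha\,\|y\|_{V_{\mathbb{P}}}^2-\lambda\,\|y\|_{H_{\mathbb{P}}}^2$ with $\alpha=\alpha(\underline{\nu},D)>0$ and $\lambda=\lambda(\underline{\nu},\mathcal{N}(a,b),D)\ge0$. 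After the change of unknown $z(t)=e^{-\lambda t}y(t)$, which turns the G\aa rding inequality into coercivity, the Lions theorem provides the unique weak solution $y\in W_{\mathbb{P}}(t_0,t_0+T)$; uniqueness for the original problem also follows from \eqref{e5} applied to the difference of two solutions.

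For the bound \eqref{e5} I would test \eqref{e7} with $\varphi=y$, use the integration-by-parts identity $\langle\partial_t y(t),y(t)\rangle=\tfrac12\tfrac{d}{dt}\|y(t)\|_{H_{\mathbb{P}}}^2$ valid on $W_{\mathbb{P}}$, the G\aa rding lower bound, Young's inequality on $\langle f,y\rangle$, and Gr\"onwall's lemma; this controls $\|y\|_{C([t_0,t_0+T];H_{\mathbb{P}})}^2+\|y\|_{L^2((t_0,t_0+T);V_{\mathbb{P}})}^2$ by the right-hand side of \eqref{e5} with a constant of order $C(D)e^{\lambda T}$, after which $\|\partial_t y\|_{L^2((t_0,t_0+T);V'_{\mathbb{P}})}$ is read off from the equation as $\le C_D(\overline{\nu}+\mathcal{N}(a,b))\|y\|_{L^2((t_0,t_0+T);V_{\mathbb{P}})}+\|f\|_{L^2((t_0,t_0+T);V'_{\mathbb{P}})}$, giving $c_1=c_1(T,\overline{\nu},\underline{\nu},a,b,D)$. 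For the observability inequality \eqref{e6} I start from the energy identity but integrate it over $[t_0,s]$ for an arbitrary $s\in[t_0,t_0+T]$:
\begin{equation*}
\begin{split}
\|y_0\|_{H_{\mathbb{P}}}^2&=\|y(s)\|_{H_{\mathbb{P}}}^2+2\int_{t_0}^{s}\mathfrak{a}(t;y,y)\,dt-2\int_{t_0}^{s}\langle f,y\rangle\,dt\\
&\le\|y(s)\|_{H_{\mathbb{P}}}^2+c\big(\overline{\nu}+\mathcal{N}(a,b)\big)\|y\|_{L^2((t_0,t_0+T);V_{\mathbb{P}})}^2+\|f\|_{L^2((t_0,t_0+T);V'_{\mathbb{P}})}^2,
\end{split}
\end{equation*}
using boundedness of $\mathfrak{a}$ and Young's inequality, with weights chosen so that $\|f\|^2$ carries coefficient exactly $1$. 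Averaging this inequality over $s\in[t_0,t_0+T]$ replaces $\|y(s)\|_{H_{\mathbb{P}}}^2$ by $T^{-1}\|y\|_{L^2((t_0,t_0+T);H_{\mathbb{P}})}^2\le C_D\,T^{-1}\|y\|_{L^2((t_0,t_0+T);V_{\mathbb{P}})}^2$, and gathering the various $\|y\|_{L^2(V_{\mathbb{P}})}^2$-coefficients into a single factor $c_2(1+T^{-1}+\mathcal{N}(a,b))$ with $c_2=c_2(\overline{\nu},\underline{\nu},D)$ yields \eqref{e6}.

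I expect the only genuinely delicate point to be the treatment of the reaction coefficient $a$, which lies merely in $L^{\infty}((0,\infty);L^r(D))$ and not in $L^\infty$: both the continuity and the G\aa rding inequality of $\mathfrak{a}$ hinge on the H\"older--Sobolev and Gagliardo--Nirenberg estimates above, and this is exactly where the hypothesis $r\ge n$ is used (so that the relevant Lebesgue exponent $2r/(r-1)$ is attained by $H^1_0(D)$ and the interpolation exponent is $\le\tfrac12$). Everything else is the standard parabolic toolbox, transplanted without modification onto the tensor-product Gelfand triple by virtue of \eqref{e48}.
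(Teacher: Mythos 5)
Your proposal is correct and follows essentially the same route as the paper: both reduce \eqref{e17} to an abstract parabolic problem on the Gelfand triple $V_{\mathbb{P}}\hookrightarrow H_{\mathbb{P}}\hookrightarrow V'_{\mathbb{P}}$, verify boundedness and a G\aa rding inequality for the form (your H\"older--Sobolev/Gagliardo--Nirenberg treatment of $a\in L^\infty((0,\infty);L^r)$ with $r\ge n$ is exactly what is needed there), and then invoke the standard existence machinery --- the paper runs the Galerkin scheme explicitly with the tensor basis of $L^2_{\mathbb{P}}(\Omega;\mathbb{R})\otimes V$, which is the same argument packaged inside the Lions theorem you cite, and no separate $\omega$-measurability step is needed in either version. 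The energy and observability estimates, which the paper delegates to \cite[Proposition 3.2]{MR4022734}, you derive directly; your averaging over $s\in[t_0,t_0+T]$ is equivalent to the weight-function multiplier $\tfrac{t_0+T-t}{T}$ used there (and in \eqref{e29}), so the constants come out with the stated dependencies.
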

\begin{proof} From \eqref{e3} and Assumption A2 it follows that the sesquilinear form
\begin{equation}
\label{e4}
\begin{split}
 b( t, \psi ,\varphi ) &=   \int_{\Omega} \int_D  \nu \nabla  \psi \cdot  \nabla  \varphi  \,dx \, d \mathbb{P}(\omega) \,dt +\langle a(t)y(t),\varphi(t) \rangle_{V'_{\mathbb{P}},V_{\mathbb{P}}}\\& -  \int_{\Omega}\int_D y(t)b(t) \cdot  \nabla \varphi  \,dx \, d \mathbb{P}(\omega)    \qquad  \forall   \psi ,  \varphi  \in V_{\mathbb{P}},
 \end{split}
\end{equation}
is coercive and continuous. Thus  there exist positive constants $c_{\min}$, $c_{\max}$,  and $c_0$, such that for every $\psi,  \varphi \in V_{\mathbb{P}}$ and a.e.  $t \in (t_0,t_0+T)$ we have
\begin{equation}
 |b( t, \psi ,\varphi )|  \leq c_{\max} \| \psi \|_{V_{\mathbb{P}}}  \| \varphi \|_{V_{\mathbb{P}}},  \quad \text{ and } \quad
b( t, \psi ,\psi  ) \geq c_{\min} \|\psi \|_{V_{\mathbb{P}}}^2-c_0 \|\psi \|_{H_{\mathbb{P}}}^2.
\end{equation}
The rest of proof follows by  using a Galerkin approximation with   orthonormal basis functions  $\{ \psi_i \otimes \phi_j \}_{i,j \geq 1}  \subset L^2_{\mathbb{P}}(\Omega; \mathbb{R} ) \otimes V \cong V_{\mathbb{P}}$ and passing to the limit in the weak formations \eqref{e7}, where $\{ \psi_i \}_{i, \geq 1} $ and $\{  \phi_j \}_{j \geq 1}$ are orthonormal bases  for the spaces $L^2_{\mathbb{P}}(\Omega; \mathbb{R})$ and  $V$,  respectively.  The energy estimate \eqref{e5}  and \eqref{e6} are obtained  with the same arguments as in \cite[Proposition 3.2.]{MR4022734}.


\end{proof}

\subsection{Stabilizability  of the controlled system}
\label{Sec3}
In this section,  we study the stabilizability of \eqref{CS} with respect to the number of actuators.  Here we restrict ourselves to the rectangular domain,  that is $D = [0,L_d]^d \subset \mathbb{R}^d$ and  follow the arguments given in \cite[Theorem 4.1]{MR4242365}  and \cite{MR4207900}.

 It is well-known that the Laplacian operator $-\Delta :  H^2(D; \mathbb{R}) \cap V \subset H   \to H$  has a compact inverse and,  thus,  there exists a nondecreasing  system of (repeated accordingly to their multiplicity) eigenvalues  $\{ \alpha_{i}  \}_{i \geq 1}$ with its associated complete basis satisfying
\begin{equation*}
 0< \alpha_1 \leq \alpha_2 \leq \cdots \leq \alpha_i \to \infty \text{ with } -\Delta e_i = \alpha_i e_i.
  \end{equation*}
For any $d\geq 1$,  we construct,  by induction,   a family of pairs   $ (\mathcal{O}_N, \mathcal{E}_N)$ such that $ H = \mathcal{O}_N\oplus \mathcal{E}^{\perp}_N $ for $N_{\sigma} = \sigma(N):=N^d$.

We start with  the one-dimensional case,  i.e.,  $d = 1$.  For this case  $D  =  (0,L_1)$  with  $L_1 > 0$,  and it has already been shown in
 \cite[Lems. 4.3 and 5.1]{RodSturm20} that $ L^2(D; \mathbb{R}) = \mathcal{O}_N\oplus \mathcal{E}^{\perp}_N $, if we take $N_{\sigma} = \sigma(N):=N$,  and for a fixed  $r\in (0,1)$ define the following sets
\begin{align*}
&\mathcal{E}_N=  \mathcal{E}^{[1]}_N := \spn \left\{ e^{[1]}_{i,N} : i \in \mathbf{N}\right \} \subset H^1_0((0,L_1); \mathbb{R})\\
&\mathcal{O}_N =\mathcal{O}^{[1]}_N := \spn \left \{ \mathbf{1}_{O^{[1]}_{i,N}} : i \in \mathbf{N} \right\}  \subset L^2((0,L_1); \mathbb{R}) \\
&O^{[1]}_{i,N} := (c^{[1]}_{i,N} -\frac{rL_1}{2N}, c^{[1]}_{i,N}+\frac{rL_1}{2N}), \quad c^{[1]}_{i,N} := \frac{(2i-1)L_1}{2N},  \quad  i\in \mathbf{N},
\end{align*}
 where  $\mathbf{N} := \{1,2,3,\dots,N\}$.  Further,   for  $i \in \mathbf{N}$,   $\mathbf{1}_{O^{[1]}_{i,N}}$  denote  the  indicator functions with supports  $O^{[1]}_{i,N}$  and   $e^{[1]}_{i,N}$  are the first eigenfunctions of the Laplacian in $L^2((0,L_1); \mathbb{R})$ under homogeneous  Dirichlet  boundary conditions.

Now,  we deal with higher-dimensional rectangular domains $D =  \bigtimes^d_{n = 1} (0,L_n)$.  Following  the results in  \cite[sect. 4.8.1]{KunRod19-cocv},   it can be shown that   the direct sum $L^2(D; \mathbb{R}) = \mathcal{E}_N \oplus \mathcal{O}^{\perp}_N $ property (note that  $  \mathcal{E}_N \oplus \mathcal{O}^{\perp}_N = \mathcal{O}_N \oplus \mathcal{E}^{\perp}_N $) holds also true for the choice $N_{\sigma} = \sigma(N):=N^d$ and the following setting
\begin{align*}
& \mathcal{E}_N := \spn \left\{ e^{\times}_{\mathbf{i},N}(x) = \bigtimes^{d}_{n =1} e^{[n]}_{i_n,S}(x_n)  : \mathbf{i}:=(i_1,\dots,i_d) \in \mathbf{N}^d \right\} \subset V \\
&\mathcal{O}_N := \spn \left\{ \mathbf{1}_{o^{\times}_{\mathbf{i},N}}(x)=\bigtimes^{d}_{n =1} \mathbf{1}_{O^{[n]}_{i_n,N}}(x_n)   : \mathbf{i} \in \mathbf{N}^d \right \} \subset H  \\
& O^{[n]}_{i,N} = (c^{[n]}_{i,N} -\frac{rL_n}{2N}, c^{[n]}_{i,N}+\frac{rL_n}{2N}), \quad c^{[n]}_{i,N} = \frac{(2i-1)L_n}{2N},  \quad  i\in \mathbf{N}, \\
&O^{\times}_{\mathbf{i},N}(x) = \bigtimes^{d}_{n=1} O^{[n]}_{\mathbf{i}_n,N}(x_n), \quad \mathbf{i} \in \mathbf{N}^d,
\end{align*}
where  $e^{[n]}_{i,N}$  with $i \in \mathbf{N}$ are the first eigenfunctions of the Laplacian  in  $L^2((0,L_n); \mathbb{R})$,   $\mathbf{i}:=(i_1,\dots,i_d)\in \mathbf{N}^d$,  and $x =(x_1,x_2,\dots,x_d) \in \bigtimes^{d}_{n=1}(0,L_n) $.  Moreover,  for this choice of the pair  $(\mathcal{O}_N, \mathcal{E}_N)$,  it can be proven,  with the same arguments as in \cite[Section 5]{MR4207900},  that  for every $N \in \mathbb{N}_0$ we have  $H = \mathcal{E}_N \oplus \mathcal{O}^{\perp}_N$,   and
\begin{equation}
\label{e12}
 \beta_N \geq  c_{\beta}N^2   \quad \text{with } \quad        \beta_N:= \inf_{Q\in( V\cap \mathcal{O}^{\perp}_N)  \backslash \{ 0\}}\frac{\| Q\|^2_V}{\|Q\|^2_H},
  \end{equation}
where  the constant  $c_{\beta}$ is  independent of $N$ and $Q$.  Figure \ref{fig.suppAc} illustrates the supports of actuators for the case $d = 2$ and different choices of $N$.

\setlength{\unitlength}{.0012\textwidth}

\newsavebox{\Rectfw}%
\savebox{\Rectfw}(0,0){%
\linethickness{3pt}
{\color{black}\polygon(0,0)(120,0)(120,80)(0,80)(0,0)}%
}%
\newsavebox{\Rectfg}%
\savebox{\Rectfg}(0,0){%
{\color{lightgray}\polygon*(0,0)(120,0)(120,80)(0,80)(0,0)}%
}%

\newsavebox{\Rectref}%
\savebox{\Rectref}(0,0){%
{\color{white}\polygon*(0,0)(120,0)(120,80)(0,80)(0,0)}%
{\color{lightgray}\polygon*(45,30)(75,30)(75,50)(45,50)(45,30)}%
}%


\begin{figure}[htbp!]
\begin{center}
\begin{picture}(500,100)


\put(0,0){\usebox{\Rectfw}}%
\put(0,0){\usebox{\Rectref}}
 \put(190,0){\usebox{\Rectfw}}
 \put(190,0){\scalebox{.5}{\usebox{\Rectref}}}
 \put(250,0){\scalebox{.5}{\usebox{\Rectref}}}
 \put(250,40){\scalebox{.5}{\usebox{\Rectref}}}
 \put(190,40){\scalebox{.5}{\usebox{\Rectref}}}
 \put(380,0){\usebox{\Rectfw}}
 \put(380,0){\scalebox{.3333}{\usebox{\Rectref}}}
\put(420,0){\scalebox{.3333}{\usebox{\Rectref}}}
\put(460,0){\scalebox{.3333}{\usebox{\Rectref}}}
\put(380,26.6666){\scalebox{.3333}{\usebox{\Rectref}}}
\put(420,26.6666){\scalebox{.3333}{\usebox{\Rectref}}}
\put(460,26.6666){\scalebox{.3333}{\usebox{\Rectref}}}
\put(380,53.3333){\scalebox{.3333}{\usebox{\Rectref}}}
\put(420,53.3333){\scalebox{.3333}{\usebox{\Rectref}}}
\put(460,53.3333){\scalebox{.3333}{\usebox{\Rectref}}}
\put(40,85){$N=1$}
\put(230,85){$N=2$}
\put(420,85){$N=3$}

\end{picture}
\end{center}
%
%

%
\begin{center}
\begin{picture}(500,100)

%
 \put(0,0){\usebox{\Rectfw}}
 \put(0,0){\scalebox{.25}{\usebox{\Rectref}}}
\put(30,0){\scalebox{.25}{\usebox{\Rectref}}}
\put(60,0){\scalebox{.25}{\usebox{\Rectref}}}
\put(90,0){\scalebox{.25}{\usebox{\Rectref}}}
\put(0,20){\scalebox{.25}{\usebox{\Rectref}}}
\put(30,20){\scalebox{.25}{\usebox{\Rectref}}}
\put(60,20){\scalebox{.25}{\usebox{\Rectref}}}
\put(90,20){\scalebox{.25}{\usebox{\Rectref}}}
\put(0,40){\scalebox{.25}{\usebox{\Rectref}}}
\put(30,40){\scalebox{.25}{\usebox{\Rectref}}}
\put(60,40){\scalebox{.25}{\usebox{\Rectref}}}
\put(90,40){\scalebox{.25}{\usebox{\Rectref}}}
\put(0,60){\scalebox{.25}{\usebox{\Rectref}}}
\put(30,60){\scalebox{.25}{\usebox{\Rectref}}}
\put(60,60){\scalebox{.25}{\usebox{\Rectref}}}
\put(90,60){\scalebox{.25}{\usebox{\Rectref}}}
\put(190,0){\usebox{\Rectfw}}
 \put(190,0){\scalebox{.2}{\usebox{\Rectref}}}
\put(214,0){\scalebox{.2}{\usebox{\Rectref}}}
\put(238,0){\scalebox{.2}{\usebox{\Rectref}}}
\put(262,0){\scalebox{.2}{\usebox{\Rectref}}}
 \put(286,0){\scalebox{.2}{\usebox{\Rectref}}}
\put(190,16){\scalebox{.2}{\usebox{\Rectref}}}
\put(214,16){\scalebox{.2}{\usebox{\Rectref}}}
\put(238,16){\scalebox{.2}{\usebox{\Rectref}}}
\put(262,16){\scalebox{.2}{\usebox{\Rectref}}}
 \put(286,16){\scalebox{.2}{\usebox{\Rectref}}}
\put(190,32){\scalebox{.2}{\usebox{\Rectref}}}
\put(214,32){\scalebox{.2}{\usebox{\Rectref}}}
\put(238,32){\scalebox{.2}{\usebox{\Rectref}}}
\put(262,32){\scalebox{.2}{\usebox{\Rectref}}}
 \put(286,32){\scalebox{.2}{\usebox{\Rectref}}}
\put(190,48){\scalebox{.2}{\usebox{\Rectref}}}
\put(214,48){\scalebox{.2}{\usebox{\Rectref}}}
\put(238,48){\scalebox{.2}{\usebox{\Rectref}}}
\put(262,48){\scalebox{.2}{\usebox{\Rectref}}}
 \put(286,48){\scalebox{.2}{\usebox{\Rectref}}}
\put(190,64){\scalebox{.2}{\usebox{\Rectref}}}
\put(214,64){\scalebox{.2}{\usebox{\Rectref}}}
\put(238,64){\scalebox{.2}{\usebox{\Rectref}}}
\put(262,64){\scalebox{.2}{\usebox{\Rectref}}}
 \put(286,64){\scalebox{.2}{\usebox{\Rectref}}}
%
 \put(380,0){\usebox{\Rectfw}}
 \put(380,0){\scalebox{.1666}{\usebox{\Rectref}}}
\put(400,0){\scalebox{.1666}{\usebox{\Rectref}}}
\put(420,0){\scalebox{.1666}{\usebox{\Rectref}}}
\put(440,0){\scalebox{.1666}{\usebox{\Rectref}}}
 \put(460,0){\scalebox{.1666}{\usebox{\Rectref}}}
\put(480,0){\scalebox{.1666}{\usebox{\Rectref}}}
 \put(380,13.3333){\scalebox{.1666}{\usebox{\Rectref}}}
\put(400,13.3333){\scalebox{.1666}{\usebox{\Rectref}}}
\put(420,13.3333){\scalebox{.1666}{\usebox{\Rectref}}}
\put(440,13.3333){\scalebox{.1666}{\usebox{\Rectref}}}
 \put(460,13.3333){\scalebox{.1666}{\usebox{\Rectref}}}
\put(480,13.3333){\scalebox{.1666}{\usebox{\Rectref}}}
 \put(380,26.6666){\scalebox{.1666}{\usebox{\Rectref}}}
\put(400,26.6666){\scalebox{.1666}{\usebox{\Rectref}}}
\put(420,26.6666){\scalebox{.1666}{\usebox{\Rectref}}}
\put(440,26.6666){\scalebox{.1666}{\usebox{\Rectref}}}
 \put(460,26.6666){\scalebox{.1666}{\usebox{\Rectref}}}
\put(480,26.6666){\scalebox{.1666}{\usebox{\Rectref}}}
 \put(380,40){\scalebox{.1666}{\usebox{\Rectref}}}
\put(400,40){\scalebox{.1666}{\usebox{\Rectref}}}
\put(420,40){\scalebox{.1666}{\usebox{\Rectref}}}
\put(440,40){\scalebox{.1666}{\usebox{\Rectref}}}
 \put(460,40){\scalebox{.1666}{\usebox{\Rectref}}}
\put(480,40){\scalebox{.1666}{\usebox{\Rectref}}}
 \put(380,53.3333){\scalebox{.1666}{\usebox{\Rectref}}}
\put(400,53.3333){\scalebox{.1666}{\usebox{\Rectref}}}
\put(420,53.3333){\scalebox{.1666}{\usebox{\Rectref}}}
\put(440,53.3333){\scalebox{.1666}{\usebox{\Rectref}}}
 \put(460,53.3333){\scalebox{.1666}{\usebox{\Rectref}}}
\put(480,53.3333){\scalebox{.1666}{\usebox{\Rectref}}}
 \put(380,66.6666){\scalebox{.1666}{\usebox{\Rectref}}}
\put(400,66.6666){\scalebox{.1666}{\usebox{\Rectref}}}
\put(420,66.6666){\scalebox{.1666}{\usebox{\Rectref}}}
\put(440,66.6666){\scalebox{.1666}{\usebox{\Rectref}}}
 \put(460,66.6666){\scalebox{.1666}{\usebox{\Rectref}}}
\put(480,66.6666){\scalebox{.1666}{\usebox{\Rectref}}}
\put(40,85){$N=4$}%
\put(230,85){$N=5$}%
\put(420,85){$N=6$}%
%
\linethickness{2pt}%
%
\end{picture}
\end{center}
\caption{The actuators supports for  $d=2$.} \label{fig.suppAc}
\end{figure}
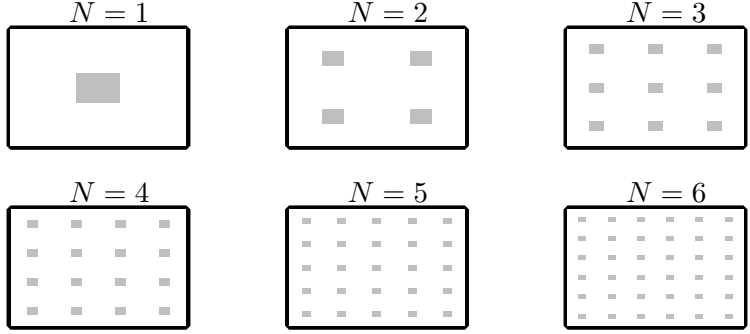
We have the  following characterization from  \cite[Lemma 3.8]{RodSturm20}   for the adjoint of the oblique projection.
\begin{lemma}
Suppose that $F$ and $G$ are closed subspaces of $H$,  for which  $H=F \oplus G$ holds.  Then for the adjoint operator of  $P^{G^{\perp}}_F \in \mathcal{L}(H)$,  we have  $(P^{G^{\perp}}_F)^* =P^{F^{\perp}}_G$.
\end{lemma}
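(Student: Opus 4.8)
The plan is to prove the identity by a direct pairing computation against the two relevant direct-sum decompositions of $H$. Since both oblique projections in the statement are well-defined and bounded (boundedness being automatic for an oblique projection onto a closed subspace along a closed complement, by the closed graph theorem), we have $H = F \oplus G^{\perp}$ and $H = G \oplus F^{\perp}$; these two decompositions are equivalent to each other by passing to orthogonal complements, using that $F$, $G$, $F^{\perp}$, $G^{\perp}$ are all closed and $(F^{\perp})^{\perp} = F$, $(G^{\perp})^{\perp} = G$. The only orthogonality relations the argument uses are $F \perp F^{\perp}$ and $G \perp G^{\perp}$.

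Next I would carry out the computation. Fix arbitrary $h, k \in H$ and write their unique decompositions
\[
h = h_F + h_{G^{\perp}}, \qquad h_F \in F,\ \ h_{G^{\perp}} \in G^{\perp},
\]
\[
k = k_G + k_{F^{\perp}}, \qquad k_G \in G,\ \ k_{F^{\perp}} \in F^{\perp},
\]
so that by definition $P^{G^{\perp}}_F h = h_F$ and $P^{F^{\perp}}_G k = k_G$. Then
\[
\bigl(P^{G^{\perp}}_F h,\, k\bigr)_H = (h_F,\, k_G)_H + (h_F,\, k_{F^{\perp}})_H = (h_F,\, k_G)_H ,
\]
since $h_F \in F$ and $k_{F^{\perp}} \in F^{\perp}$, while
\[
\bigl(h,\, P^{F^{\perp}}_G k\bigr)_H = (h_F,\, k_G)_H + (h_{G^{\perp}},\, k_G)_H = (h_F,\, k_G)_H ,
\]
since $h_{G^{\perp}} \in G^{\perp}$ and $k_G \in G$. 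Hence $\bigl(P^{G^{\perp}}_F h,\, k\bigr)_H = \bigl(h,\, P^{F^{\perp}}_G k\bigr)_H$ for all $h, k \in H$, which is precisely $(P^{G^{\perp}}_F)^{*} = P^{F^{\perp}}_G$.

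I do not expect a genuine obstacle here; the only care required is the orthogonal-complement bookkeeping needed to see that the four cross terms split as above and that all projections are well-defined. A more structural alternative would be to invoke the general facts that for $T \in \mathcal{L}(H)$ one has $\ker(T^{*}) = \operatorname{ran}(T)^{\perp}$ and $\overline{\operatorname{ran}(T^{*})} = \ker(T)^{\perp}$, and that $T^{*}$ is again an idempotent when $T$ is (so $\operatorname{ran}(T^{*}) = \ker(I - T^{*})$ is closed and the closure can be dropped): applied to $T = P^{G^{\perp}}_F$, which has range $F$ and kernel $G^{\perp}$, this gives that $(P^{G^{\perp}}_F)^{*}$ is the idempotent with range $(G^{\perp})^{\perp} = G$ and kernel $F^{\perp}$, i.e.\ exactly $P^{F^{\perp}}_G$. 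If one prefers to derive the decompositions $H = F \oplus G^{\perp}$ and $H = G \oplus F^{\perp}$ from the bare direct-sum hypothesis, one uses $(F+G)^{\perp} = F^{\perp} \cap G^{\perp}$, $(F \cap G)^{\perp} = \overline{F^{\perp} + G^{\perp}}$, and the fact that $F + G$ closed forces $F^{\perp} + G^{\perp}$ closed.
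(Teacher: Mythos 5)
Your argument is correct, and it differs from the paper mainly in that the paper does not prove this lemma at all: it is quoted verbatim from \cite[Lemma 3.8]{RodSturm20}. Your direct pairing computation (decompose $h$ along $H=F\oplus G^{\perp}$ and $k$ along $H=G\oplus F^{\perp}$, and kill the two cross terms using only $F\perp F^{\perp}$ and $G\perp G^{\perp}$) is the standard self-contained proof, and your structural alternative is arguably the cleaner route: for the bounded idempotent $T=P^{G^{\perp}}_F$ with $\operatorname{ran}(T)=F$, $\ker(T)=G^{\perp}$, the identities $\ker(T^{*})=\operatorname{ran}(T)^{\perp}$ and $\operatorname{ran}(T^{*})=\ker(T)^{\perp}$ (with closed range since $T^{*}$ is idempotent) deliver both the existence of the second decomposition $H=G\oplus F^{\perp}$ and the identification $T^{*}=P^{F^{\perp}}_G$ in one stroke, rather than assuming the second decomposition up front as the pairing argument does.

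One caution about your closing remark: you cannot recover $H=F\oplus G^{\perp}$ and $H=G\oplus F^{\perp}$ from the literal hypothesis $H=F\oplus G$. The annihilator identities you invoke only yield $H=F^{\perp}\oplus G^{\perp}$, and indeed $H=F\oplus G$ does not imply $H=F\oplus G^{\perp}$ (take $G=F^{\perp}$, so that $G^{\perp}=F$). This is really an imprecision in the statement as transcribed here: for $P^{G^{\perp}}_F$ to be defined at all one needs $H=F\oplus G^{\perp}$, which is exactly what holds in the application ($L^2(D;\mathbb{R})=\mathcal{O}_N\oplus\mathcal{E}_N^{\perp}$ with $F=\mathcal{O}_N$, $G=\mathcal{E}_N$). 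Your main argument correctly works from that reading, so the proof stands once the hypothesis is stated as $H=F\oplus G^{\perp}$ (equivalently $H=G\oplus F^{\perp}$); just drop or correct the final sentence.
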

In the next theorem,  we investigate the stabilizability of the following control system
\begin{equation}
\label{e9}
\begin{cases}
\partial_t y- \nabla \cdot ( \nu(\omega)\nabla y)+ a(t,x)y+  \nabla \cdot (b(t)y) = \sum^{N}_{i =1} u_i(t)\mathbf{1}_{O_i}   &   (t,x) \in  (t_0,\infty)\times D,\\
y =0   & (t,x)  \in  (t_0,\infty) \times \partial D,\\
y(t_0)=y_0(\omega) &  x \in D,
\end{cases}
\end{equation}
for almost surly $\omega \in \Omega$.
\begin{theorem}[Uniform stabilizability of \eqref{e9}]
\label{Theo1}

For each  $\mu>0$, there exists an integer $N^* \in \mathbb{N}_0$ such that for every  $N\geq N^*$ there exists  a feedback control vector $\bar{ \mathbf{u}}(y) = (\bar{u}_1, \dots, \bar{u}_N) \in L^2((t_0,\infty);U^{N}_{\mathbb{P}} ) \cong  L_{\mathbb{P}}^2(\Omega; \mathbb{R}) \otimes L^2((t_0,\infty); \mathbb{R}^N)$ for system \eqref{e9} whose associated state satisfies
\begin{equation}
\label{e13}
\| y(t)\|^{2}_{H_{\mathbb{P}}} \leq e^{-\mu (t-t_0)} \|y_0\|^2_{H_{\mathbb{P}}}  \quad  \text{ for all  }  t>0,
\end{equation}
for any given  $(t_0,y_0) \in \mathbb{R}_{\geq 0} \times H_{\mathbb{P}}$.
\end{theorem}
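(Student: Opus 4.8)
The plan is to reduce the stabilizability of the random system \eqref{e9} to a family of deterministic stabilization estimates that are uniform in $\omega$, and then assemble them into a single feedback in the tensor-product space. First I would fix $\mu>0$ and, for each realization $\omega\in\Omega$, work with the deterministic parabolic equation obtained by freezing $\nu=\nu(\omega,\cdot)$. Using the splitting $H=\mathcal{E}_N\oplus\mathcal{O}_N^{\perp}$ constructed above, I would look for a control of the explicit feedback form $\bar{\mathbf u}(y)=-\lambda\,\mathcal{B}^{*}P^{\mathcal{O}_N^{\perp}}_{\mathcal{E}_N}y$ for a suitable gain $\lambda>0$, where $\mathcal{B}:\mathbb{R}^N\to H$ is the control operator $\mathcal{B}\mathbf u=\sum_i u_i\mathbf 1_{O_i}$ and the adjoint is taken with respect to the oblique projection identity $(P^{\mathcal{O}_N^{\perp}}_{\mathcal{E}_N})^{*}=P^{\mathcal{E}_N^{\perp}}_{\mathcal{O}_N}$ from the Lemma just stated. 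With this choice the controlled equation becomes a closed-loop parabolic equation, and testing with $y$ gives an energy identity.

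The key analytic step is the coercivity/Poincaré-type estimate \eqref{e12}: on $V\cap\mathcal{O}_N^{\perp}$ one has $\|Q\|_V^2\ge c_\beta N^2\|Q\|_H^2$, so the diffusion term controls the $H$-norm on the "uncontrolled" subspace with a constant proportional to $N^2$, and the gain $\lambda$ handles the complementary finite-dimensional part $\mathcal{E}_N$. I would split $y=P^{\mathcal{O}_N^{\perp}}_{\mathcal{E}_N}y+(I-P^{\mathcal{O}_N^{\perp}}_{\mathcal{E}_N})y=:y_{\mathcal E}+y_{\mathcal O^{\perp}}$, bound the reaction and convection contributions using Assumption A2 (the $L^r$-with-$r\ge n$ condition on $a$ gives, via Gagliardo–Nirenberg/Hölder, a bound of the form $|\langle a y,y\rangle|\le \varepsilon\|y\|_V^2+C_\varepsilon\mathcal N(a,b)^{2}\|y\|_H^2$, and similarly for $b$), and absorb the $\varepsilon\|y\|_V^2$ terms into the diffusion term $\underline\nu\|\nabla y\|_H^2$. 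Choosing $N$ large enough that $c_\beta\underline\nu N^2$ dominates $\mu/2$ plus the lower-order constants, and $\lambda$ large enough to dissipate $\mu/2$ from $y_{\mathcal E}$, yields $\frac12\frac{d}{dt}\|y(t)\|_H^2\le -\mu\|y(t)\|_H^2$ for a.e.\ $t$, pointwise in $\omega$. Gronwall then gives $\|y(t,\omega)\|_H^2\le e^{-\mu(t-t_0)}\|y_0(\omega)\|_H^2$.

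The crucial observation making this work uniformly in $\omega$ is that $N^*$, $\lambda$, $c_\beta$ can be chosen depending only on $\underline\nu,\overline\nu,\mathcal N(a,b),D,\mu$ — not on the particular realization — because \eqref{e3} bounds $\nu(\omega,\cdot)$ uniformly and A2 is deterministic. Hence the same $N$ and the same feedback law work for every $\omega$. Finally I would integrate the pointwise-in-$\omega$ inequality $\|y(t,\omega)\|_H^2\le e^{-\mu(t-t_0)}\|y_0(\omega)\|_H^2$ over $\Omega$ against $d\mathbb P$ to obtain \eqref{e13} in $H_{\mathbb P}$, and note that the feedback $\omega\mapsto\bar{\mathbf u}(y(\cdot,\omega))$ indeed lies in $L^2((t_0,\infty);U^N_{\mathbb P})\cong L^2_{\mathbb P}(\Omega;\mathbb R)\otimes L^2((t_0,\infty);\mathbb R^N)$, using $\|\bar{\mathbf u}\|^2\lesssim\lambda^2\|P^{\mathcal{O}_N^{\perp}}_{\mathcal{E}_N}\|^2\int_{t_0}^\infty\|y(t)\|_{H_{\mathbb P}}^2\,dt<\infty$ by the exponential decay just obtained; measurability in $\omega$ follows from measurability of $\nu(\cdot)$ together with continuous dependence of the (deterministic, linear) closed-loop solution on the coefficient. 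The main obstacle I anticipate is the careful bookkeeping of constants in the energy estimate — in particular verifying that the reaction term's constant really can be made to depend only on $\mathcal N(a,b)$ and $D$ (not on $N$) so that it is absorbed by $c_\beta\underline\nu N^2$ — and checking that the oblique projection $P^{\mathcal{O}_N^{\perp}}_{\mathcal{E}_N}$ maps into $V$ and interacts correctly with the bilinear form; this is where I would follow the deterministic template of \cite[Theorem 4.1]{MR4242365} and \cite{MR4207900} most closely.
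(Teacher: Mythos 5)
Your proposal follows the same overall strategy as the paper: freeze $\omega$, use the oblique splitting $y=y_{\mathcal E}+y_{\mathcal O^\perp}$ induced by $H=\mathcal E_N\oplus\mathcal O_N^\perp$, control the $\mathcal O_N^\perp$-component through the spectral bound \eqref{e12} and the $\mathcal E_N$-component through the gain $\lambda$, note that all constants depend only on $\underline\nu,\overline\nu,\mathcal N(a,b),D,\mu$ by \eqref{e3} and A2, apply Gronwall pathwise and integrate over $\Omega$. The genuine difference is the feedback law. The paper injects $-\lambda P^{\mathcal E_N^\perp}_{\mathcal O_N}\Delta P^{\mathcal O_N^\perp}_{\mathcal E_N}y$, and the adjoint lemma gives, upon testing with $y$, exactly $-\lambda\|y_{\mathcal E}\|_V^2$ (see \eqref{e42}); this is precisely what absorbs the wrong-sign term $+\nu_{\min}\|y_{\mathcal E}\|_V^2$ produced by the oblique (non-orthogonal) splitting of the diffusion term, with a gain $\lambda^*$ independent of $N$ (see \eqref{e15}). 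Your collocated choice $\bar{\mathbf u}=-\lambda\,\mathcal B^{*}P^{\mathcal O_N^\perp}_{\mathcal E_N}y$ instead yields, after testing, $-\lambda\,\lvert\mathcal B^{*}y_{\mathcal E}\rvert_{\ell_2}^2$ (note $\mathcal B^{*}y=\mathcal B^{*}y_{\mathcal E}$ because $y_{\mathcal O^\perp}\perp\mathcal O_N$ in $H$), i.e.\ only an $H$-norm damping of $y_{\mathcal E}$, with constant $c_N:=\inf\{\lvert\mathcal B^{*}\theta\rvert_{\ell_2}^2:\theta\in\mathcal E_N,\ \|\theta\|_H=1\}>0$, which is positive since $\mathcal E_N\cap\mathcal O_N^\perp=\{0\}$ and $\dim\mathcal E_N<\infty$ but degenerates with $N$. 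Incidentally, the adjoint of $\mathcal B$ is simply $v\mapsto((v,\mathbf 1_{O_i})_H)_{i}$ and has nothing to do with the identity $(P^{\mathcal O_N^\perp}_{\mathcal E_N})^{*}=P^{\mathcal E_N^\perp}_{\mathcal O_N}$; that lemma is what the paper needs to move the projection across the duality pairing for its Laplacian-based feedback.

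For your route to close you must supply two steps that the sketch glosses over. First, the statement that \eqref{e12} lets the diffusion ``control the $H$-norm on the uncontrolled subspace'' is not immediate: \eqref{e12} applies to $y_{\mathcal O^\perp}$ only, while the diffusion gives $-\nu_{\min}\|y\|_V^2$ for the full state, and since the projections are oblique there is no orthogonality in $V$ (nor in $H$); expanding $\|y_{\mathcal E}+y_{\mathcal O^\perp}\|_V^2$ and using Young, as in \eqref{e45}, leaves the positive term $+C\,\nu_{\min}\|y_{\mathcal E}\|_V^2$ that your feedback must dominate. Second, because your damping is only $-\lambda c_N\|y_{\mathcal E}\|_H^2$, you need the inverse estimate $\|\theta\|_V^2\le\Lambda_N\|\theta\|_H^2$ on the finite-dimensional space $\mathcal E_N$ (with $\Lambda_N$ the largest Dirichlet eigenvalue entering $\mathcal E_N$, of order $N^2$) and then a gain $\lambda\ge\bigl(2\mu+\overline\nu\Lambda_N+c\,\mathcal N^2(a,b)/\underline\nu\bigr)/c_N$; this is still uniform in $\omega$ by \eqref{e3}, so the theorem follows, but in contrast to the paper your gain is necessarily $N$-dependent. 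With these two finite-dimensional estimates added, your energy inequality gives $\frac{d}{dt}\|y\|_H^2\le-2\mu\|y\|_H^2$ pathwise and the remaining steps (Gronwall, integration over $\Omega$, square-integrability and measurability of the feedback) are exactly as in the paper.
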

\begin{proof}
We set as the feedback control law
\begin{equation}
\label{e10}
    \sum_{i=1}^N \bar{u}_i(t,\omega) \mathbf{1}_{O_i}
    :=  - \lambda P_{\mathcal{O}_{N}}^{ \mathcal{E}_{N}^\perp}  \Delta P_{ \mathcal{E}_{N}}^ { \mathcal{O}^\perp_{N}}y(t,\omega) \quad \text{ for a.e. } t>0  \text{ a.s. } \omega \in \Omega,
\end{equation}
with
\begin{equation}
\label{e10a}
\mathbf{\bar{u}}(y)=(\bar{u}_1(y), \dots, \bar{u}_N(y))^t := -\lambda\mathcal{I} P_{\mathcal{O}_{N}}^{ \mathcal{E}_{N}^\perp}  \Delta P_{ \mathcal{E}_{N}}^ { \mathcal{O}^\perp_{N}}y(t,\omega),
\end{equation}
where  $\mathcal{I}: \mathcal{O}_N \to  \mathbb{R}^N$ stands for  the canonical isomorphism,  and  the numbers $\lambda>0$ and  $N \in \mathbb{N}_0$ are specified below.  Inserting \eqref{e10} in equation \eqref{e9},  multiplying with $y$,  and integrating over $D$,  we obtain for almost every $t>0$ and  almost surely  $\omega \in \Omega$ that
 \begin{equation}
 \label{e11}
\begin{split}
&\frac{d}{2dt} \|y(t, \omega)\|^2_H+   ( \nu(t, \omega) \nabla y(t,\omega),  \nabla y(t,\omega) )_{H}+   \langle a(t)y(t,\omega),y(t,\omega)\rangle_{V',V} \\& -  \langle  y(t,\omega), b(t)\cdot \nabla y(t,\omega) \rangle_{V',V} + \lambda  \langle P_{\mathcal{O}_{N}}^{ \mathcal{E}_{N}^\perp}  \Delta P_{ \mathcal{E}_{N}}^ { \mathcal{O}^\perp_{N}}y(t,\omega), y(t,\omega) \rangle_{V,V'}= 0
\end{split}
\end{equation}
From now on, we omit $\omega$ for simplicity,  i.e.  $y(t,\omega) = y(t)$.  From \eqref{e11} and using A1, it follows that
\begin{equation*}
\begin{split}
&\frac{d}{dt} \|y(t)\|^2_H  \leq  - 2\nu_{\min}(\omega) \| y(t) \|^2_V +2 | \langle a(t)y(t),y(t) \rangle_{V',V}|\\& + 2|(b(t)y(t), \nabla y(t))_H| + 2 \lambda  \langle P_{\mathcal{O}_{N}}^{ \mathcal{E}_{N}^\perp}  \Delta P_{ \mathcal{E}_{N}}^ { \mathcal{O}^\perp_{N}}y(t,\omega), y(t,\omega) \rangle_{V',V}.
\end{split}
\end{equation*}
We use the following decomposition
\begin{equation*}
y = \theta+ \varphi  \quad  \text{ with }  \theta := P_{ \mathcal{E}_{N}}^ { \mathcal{O}^\perp_{N}}y  \quad  \text{ and } \quad  \varphi := P^{ \mathcal{E}_{N}}_{ \mathcal{O}^\perp_{N}}y,
\end{equation*}
which is justified due to the definition of the oblique projection.  Further, since
$\theta  \in  \mathcal{E}_{N} \subset V$,  then $\Delta \theta \in V'$.  Thus,  the operator $P_{\mathcal{O}_{N}}^{ \mathcal{E}_{N}^\perp}$ can be considered as its unique linear extension to $V'$.  That is  $P_{\mathcal{O}_{N}}^{ \mathcal{E}_{N}^\perp}  \Delta \theta \in \mathcal{O}_N \subset H \subset V'$ and we have
\begin{equation*}
\langle P_{\mathcal{O}_{N}}^{ \mathcal{E}_{N}^\perp}  \Delta \theta, w \rangle_{V',V} =  \langle  \Delta \theta,    P_{\mathcal{E}_{N}}^{ \mathcal{O}_{N}^\perp}  w \rangle_{V',V}    \quad \text{ for all } w \in V,
\end{equation*}
which is well-defined due  the fact that  $P_{\mathcal{E}_{N}}^{ \mathcal{O}_{N}^\perp}  w \in \mathcal{E}_N \subset V$.   Thus,  we can write
\begin{equation}
\label{e42}
-\langle P_{\mathcal{O}_{N}}^{ \mathcal{E}_{N}^\perp}  \Delta \theta, y \rangle_{V',V} =  -\langle  \Delta \theta,   \theta \rangle_{V',V}  = \| \theta  \|^2_V.
\end{equation}
From \eqref{e11} and \eqref{e42},  its follows by repeated use  of  Young's inequality that
\begin{equation}
\label{e45}
\begin{split}
&\frac{d}{dt} \|y(t)\|^2_H \\ &  \leq  - 2\nu_{\min}(\omega) \| y(t) \|^2_V +2 | \langle a(t)y(t),y(t) \rangle_{V',V}| + 2|(b(t)y(t), \nabla y(t))_H| - 2\lambda \|\theta (t)\|^2_V\\
&\leq  - 2\nu_{\min}(\omega) \| y(t) \|^2_V+ 2c \mathcal{N}(a,b)\|y(t)\|_H\|y(t)\|_V  - 2\lambda \|\theta(t)\|^2_V\\
&\leq  -\nu_{\min}(\omega) \| \theta(t) + \varphi(t) \|^2_V+ \frac{c^2\mathcal{N}^2(a,b)}{\nu_{\min}(\omega)} \| \theta(t) + \varphi(t)\|^2_H-2\lambda \|\theta(t) \|^2_V,\\
&\leq  -\nu_{\min}(\omega) \left( \| \theta(t)\|^2_V + \|\varphi(t) \|^2_V\right)+\nu_{\min}(\omega) \left(\kappa_1 \| \theta(t)\|^2_V + \frac{1}{\kappa_1} \|\varphi(t) \|^2_V\right)\\
&+\frac{c^2\mathcal{N}^2(a,b)}{\nu_{\min}(\omega)} \left( \| \theta(t)\|^2_H + \|\varphi(t)\|^2_H\right)+ \frac{c^2\mathcal{N}^2(a,b)}{\nu_{\min}(\omega)} \left( \kappa_2 \| \theta(t)\|^2_H +\frac{1}{\kappa_2} \|\varphi(t)\|^2_H \right)-2\lambda \|\theta(t)\|^2_V,
\end{split}
\end{equation}
where $c$ is a generic constant that depends only on $D$,  and the numbers  $\kappa_1>0$ and $\kappa_2>0$ can be chosen arbitrary.    Setting $\kappa_1 =\kappa_2 = 2$ in the  above inequality,   we obtain
\begin{equation}
\label{e43}
\begin{split}
&\frac{d}{dt} \|y(t)\|^2_H \leq  - \left(2\lambda- \nu_{\min}(\omega) \right) \| \theta(t)\|^2_V+ \frac{3c^2\mathcal{N}^2(a,b)}{\nu_{\min}(\omega)} \| \theta(t)\|^2_H\\
&- \frac{\nu_{\min}(\omega)}{2}\| \varphi(t)\|^2_V+ \frac{3c^2\mathcal{N}^2(a,b)}{2\nu_{\min}(\omega)}\| \varphi(t)\|^2_H \leq -\Theta_{\theta}(N,\lambda)\|\theta(t)\|^2_H- \Theta_{\varphi}(N,\lambda)\|\varphi(t)\|^2_H,
\end{split}
\end{equation}
where the constants  $\Theta_{\theta}$ and $\Theta_{\varphi}$  are defined by
\begin{align}
\label{e18}
\Theta_{\theta}(\omega ,N,\lambda)&:=  \left(2\lambda- \nu_{\min}(\omega) \right)  \alpha_1- \frac{3c^2\mathcal{N}^2(a,b)}{\nu_{\min}(\omega)}, \\\label{e19}
 \Theta_{\varphi}(\omega,N,\lambda)&:= \frac{\nu_{\min}(\omega)}{2}\beta_N- \frac{3c^2\mathcal{N}^2(a,b)}{2\nu_{\min}(\omega)},
\end{align}
with  $\beta_N$ given in \eqref{e12}  and $\alpha_1$ as the smallest eigenvalue of the Laplacian with homogeneous Dirichlet  boundary conditions.

Choosing $N^*$ and $\lambda^*$  such that
\begin{equation}
\label{e15}
 \beta_{N^*} \geq \frac{2}{\underline{\nu}}\left(4\mu  + \frac{3c^2\mathcal{N}^2(a,b)}{2\underline{\nu}}  \right) \quad \text{ and } \quad
 \lambda^* \geq \frac{1}{2\alpha_1} \left( 4\mu+ \frac{3c^2\mathcal{N}^2(a,b)}{\underline{\nu}} \right) +\frac{\overline{\nu}}{2}.
\end{equation}
We can infer   for  every  $N\geq N^*$,   $\lambda \geq \lambda^* $  that $\Theta_{\theta}(N,\lambda)\geq 4\mu$  and  $\Theta_{\varphi}(N,\lambda) \geq 4\mu$.   Therefore,  together with \eqref{e43} we obtain
\begin{equation}
\label{e44}
\begin{split}
&\frac{d}{dt} \|y(t)\|^2_H \leq -\Theta_{\theta}(N,\lambda)\|\theta(t)\|^2_H- \Theta_{\varphi}(N,\lambda)\|\varphi(t)\|^2_H \leq  -4 \mu\|\theta(t)\|^2_H- 4\mu\|\varphi(t)\|^2_H \\ &\leq  -2\mu \left(  \|\theta(t)\|^2_H  +\|\varphi(t)\|^2_H + 2(\theta (t), \varphi (t))_H   \right)\leq -2\mu \| \theta(t)+\varphi(t) \|^2_H  \leq -2\mu \|y(t)\|^2_H,
\end{split}
\end{equation}
 for  a.e.   $t>t_0$  and  $\omega \in \Omega$ a.s..   Integrating \eqref{e44} over interval $(t_0, t)$ we obtain that
\begin{equation}
\label{e14}
\|  y(t,\omega) \|^2_H \leq  e^{-2\mu(t-t_0)} \|y(t_0,\omega)\|^2_H  =   e^{-2\mu(t-t_0)} \|y_0(\omega)\|^2_H     \quad \omega \in \Omega  \text{ for a.s. }.
\end{equation}
Finally,  integrating \eqref{e14}  over $\Omega$,  we obtain
\begin{equation*}
\|  y(t) \|^2_{H_{\mathbb{P}}} =  \mathbb{E} \left[ | y(t) \|^2_{H} \right]   \leq  e^{-2\mu(t-t_0)} \mathbb{E} \left[ \|y(t_0)\|^2_H \right]   =   e^{-2\mu(t-t_0)} \|y_0\|^2_{H_{\mathbb{P}}}.
\end{equation*}
This together with the fact that $ \bar{ \mathbf{u}} \in L^2((t_0,\infty);U^{N}_{\mathbb{P}})$ (see   \eqref{e10a}) completes the proof.
\end{proof}

\begin{remark}
\label{Rem1}
Assume that $b =0$ and $a \in L^{\infty}((0,\infty)\times D ; \mathbb{R})$.  Then,   by replacing   the term  $c \mathcal{N}(a,b)\|y(t)\|_H\|y(t)\|_V$ with $\| a\|_{L^{\infty}((0,\infty)\times D ; \mathbb{R})} \|y(t)\|^2_H$ in the third line of \eqref{e45} and following the same lines of computations as above,    \eqref{e18} and \eqref{e19} can be expressed as
\begin{align*}
\Theta_{\theta}(\omega ,N,\lambda)&:=  2\left(\lambda -\nu_{\min}(\omega) \right)  \alpha_1- 6  \|a\|_{ L^{\infty}((0,\infty)\times D ; \mathbb{R})}, \\
 \Theta_{\varphi}(\omega,N,\lambda)&:=\nu_{\min}(\omega) \beta_N- 3  \|a\|_{ L^{\infty}((0,\infty)\times D ; \mathbb{R})}.
\end{align*}
Hence,  for any given rate $\mu>0$,  the stabilizability result \eqref{e13} holds for $N^*$ and $\lambda^*$ satisfying
\begin{align}
\label{e39}
 \beta_{N^*} \geq \frac{1}{\underline{\nu}}\left(4\mu  + 3\|a\|_{ L^{\infty}((0,\infty)\times D ; \mathbb{R})}  \right)  \quad  \text{ and } \quad
 \lambda^* \geq \frac{1}{\alpha_1} \left( 2\mu+ 3\|a\|_{ L^{\infty}((0,\infty)\times D ; \mathbb{R})} \right)  +\overline{\nu}.
\end{align}
 \end{remark}
\subsection{Stability  of stochastic RHC}
In this section,  we investigate the stability of the receding horizon algorithm \ref{RRHA}.  Our theoretical results are expressed in terms of the finite- and infinite-horizon value functions and are based on the stability result given in the previous section.

We have the following stability result for the stochastic  RHC $ \mathbf{u}_{rh}$ obtained by Algorithm \ref{RRHA} with $U =U^{N}_{\mathbb{P}}$  and the choices of $\ell(t,y) = \| y\|^2_V$ and  $\ell(t,y)= \| y\|^2_H$ for almost every $t \in (0, \infty)$.
\begin{theorem}
\label{Theo3}
Suppose that $D \subset \mathbb{R}^d$ with $d \geq 1$ is a rectangle.  Then for every choice of  $\ell(t,\cdot) = \| \cdot \|^2_V$ or   $\ell(t,\cdot)= \| \cdot\|^2_H$,  there exits an $N^* = N^*(a,b,\nu) \in \mathbb{N}$ such the RHC computed by Algorithm \ref{RRHA} with $U:=U^{N}_{\mathbb{P}}$  is,  for every $N\geq N^*$ and set of actuators $\mathbf{1}_{O_i}$ with $i=1, \dots,N$ given in the previous section,  suboptimal and stabilizing.  That is  for every given $\delta>0$  there exist numbers $T^* > \delta$, and $\alpha \in (0,1)$,  such that for every fixed  prediction horizon $T \geq T^*$, and every $y_0 \in H_{\mathbb{P}}$ the control $\mathbf{u}_{rh} \in L^2((0,\infty);U^{N}_{\mathbb{P}} )$ provided Algorithm \ref{RRHA} by satisfies the suboptimality inequality
\begin{equation}
\label{ed27}
\alpha V_{\infty}(y_0) \leq \alpha J_{\infty}(\mathbf{u}_{rh};0,y_0)\leq V_T(0,y_0) \leq V_{\infty}(y_0).
\end{equation}
Furthermore,  we have
\begin{equation}
\label{e26}
\ \|y(t)\|^2_{H_{\mathbb{P}}}   \to 0     \quad  \text{ as }   t \to \infty,
\end{equation}
for the choice of $\ell(t,\cdot)= \| \cdot\|^2_H$,   and
\begin{equation}
\label{e26b}
\ \|y(t)\|^2_{H_{\mathbb{P}}} \leq e^{- \zeta t} c_e\|y_0\|^2_{H_{\mathbb{P}}}  \quad  \text{ for }   t \geq 0,
\end{equation}
for the choice of $\ell( t, \cdot)= \| \cdot\|^2_V$,   where $\zeta$ and $c_e$ are independent of $y_0$.
\end{theorem}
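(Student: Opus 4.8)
\emph{Proof idea.} The plan is to reduce the claim to the abstract receding horizon stability result of \cite{MR4022734}, by verifying in the present stochastic setting its three structural ingredients: a uniform stabilizability bound $V_\infty(y_0)\le C_1\|y_0\|_{H_{\mathbb{P}}}^2$; the elementary continuity estimate $V_T(0,y_0)\le V_\infty(y_0)$; and an observability-type decay of the finite-horizon optimal state. Once these hold, the chain \eqref{ed27} is exactly what that machinery produces, and the two convergence statements \eqref{e26}, \eqref{e26b} are read off from it, the exponential rate requiring one extra, case-dependent lower bound on $V_\infty$.

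\textbf{Step 1 (uniform stabilizability).} Fix once and for all a rate $\mu_0>0$. Theorem~\ref{Theo1} then gives an integer $N^*=N^*(\mu_0,a,b,\nu,D)=:N^*(a,b,\nu)$ such that for every $N\ge N^*$ the feedback $\bar{\mathbf{u}}$ from \eqref{e10a} steers \eqref{e9} with the decay \eqref{e13}, \eqref{e14}. Since $\mathcal{E}_N$ is finite dimensional (so $\Delta$ is bounded on it) and the oblique projections are bounded, $\|\bar{\mathbf{u}}(y(t))\|_{U^N_{\mathbb{P}}}\le c_N\|y(t)\|_{H_{\mathbb{P}}}$, whence, with \eqref{e14}, $\int_0^\infty\|\bar{\mathbf{u}}(y(t))\|_{U^N_{\mathbb{P}}}^2\,dt\le c_N^2\mu_0^{-1}\|y_0\|_{H_{\mathbb{P}}}^2$. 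For $\ell(t,\cdot)=\|\cdot\|_H^2$ the running cost integrates to $\le\mu_0^{-1}\|y_0\|_{H_{\mathbb{P}}}^2$ directly by \eqref{e13}. For $\ell(t,\cdot)=\|\cdot\|_V^2$ I would keep a portion of the dissipation terms $\|\theta\|_V^2$, $\|\varphi\|_V^2$ in the first line of \eqref{e43} instead of bounding them below by $\|\theta\|_H^2$, $\|\varphi\|_H^2$, obtaining $\tfrac{d}{dt}\|y(t)\|_{H_{\mathbb{P}}}^2\le-c\|y(t)\|_{V_{\mathbb{P}}}^2+C\|y(t)\|_{H_{\mathbb{P}}}^2$ and, after integrating over $(0,\infty)$ with \eqref{e14}, $\int_0^\infty\|y(t)\|_{V_{\mathbb{P}}}^2\,dt\le C'\|y_0\|_{H_{\mathbb{P}}}^2$. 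In either case $V_\infty(y_0)\le J_\infty(\bar{\mathbf{u}};0,y_0)\le C_1\|y_0\|_{H_{\mathbb{P}}}^2$; this fixes $N^*$.

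\textbf{Steps 2 and 3 (continuity and observability).} The bound $V_T(0,y_0)\le V_\infty(y_0)$ is immediate: truncating a near-optimal control of \ref{opinf} to $(0,T)$ is admissible for \ref{optT} and, since $\ell\ge0$, $\beta\ge0$, does not increase the cost; likewise $\alpha V_\infty(y_0)\le\alpha J_\infty(\mathbf{u}_{rh};0,y_0)$ holds because $V_\infty$ is the infimum of $J_\infty$. The real work is the decay estimate
\begin{equation*}
\|y_T^*(t_0+T;t_0,y_0)\|_{H_{\mathbb{P}}}^2\le\mu(T)\,V_T(t_0,y_0),\qquad\mu(T)\to0\ \text{ as }\ T\to\infty,
\end{equation*}
uniformly in $t_0\ge0$, $y_0\in H_{\mathbb{P}}$. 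I would prove it by covering $(t_0,t_0+T)$ by windows of fixed unit length: on each window \eqref{e6} bounds $\|y_T^*(t_0+j)\|_{H_{\mathbb{P}}}^2$ by the running cost plus the control energy accumulated there, the latter absorbed through $\|\sum_i u_i\mathbf{1}_{O_i}\|_{V'_{\mathbb{P}}}\le C\|\mathbf{u}\|_{U^N_{\mathbb{P}}}$ and the $\beta$-term of $J_T$; the dynamic programming identity $V_{T-j}(t_0+j,y_T^*(t_0+j))=V_T(t_0,y_0)-\int_{t_0}^{t_0+j}(\tfrac12\mathbb{E}[\ell(t,y_T^*(t))]+\tfrac{\beta}{2}\|\mathbf{u}_T^*(t)\|_{U^N_{\mathbb{P}}}^2)\,dt$ shows that the per-window costs sum to at most $V_T(t_0,y_0)$; and the energy estimate \eqref{e5} on the final window $(t_0+T-1,t_0+T)$, whose constant is independent of $T$, carries the resulting smallness forward to time $t_0+T$. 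The quantitative point is that the coefficient $c_2(1+T^{-1}+\mathcal{N}(a,b))$ in \eqref{e6} stays bounded as $T\to\infty$, so spreading a bounded total cost over $\sim T$ windows forces $\mu(T)$ to be of order $T^{-1}$.

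\textbf{Step 4 (conclusion).} With Steps 1--3 available, the relaxed dynamic programming argument of \cite{MR4022734} yields: for each $\delta>0$, choosing $T^*>\delta$ so large that $C_1\mu(T^*)<1$ gives, for every $T\ge T^*$, a suboptimality degree $\alpha=\alpha(\delta,T)\in(0,1)$ and the telescoping inequality $\alpha J_\infty(\mathbf{u}_{rh};0,y_0)\le V_T(0,y_0)$, i.e.\ \eqref{ed27}, together with $\mathbf{u}_{rh}\in L^2((0,\infty);U^N_{\mathbb{P}})$. Finiteness of $J_\infty(\mathbf{u}_{rh};0,y_0)$ forces $\int_0^\infty\mathbb{E}[\ell(t,y_{rh}(t))]\,dt<\infty$. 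For $\ell(t,\cdot)=\|\cdot\|_H^2$ this is $\int_0^\infty\|y_{rh}(t)\|_{H_{\mathbb{P}}}^2\,dt<\infty$; since $t\mapsto\|y_{rh}(t)\|_{H_{\mathbb{P}}}^2$ is absolutely continuous with $L^1(0,\infty)$ derivative (by \eqref{e5} and $\partial_t y_{rh}\in L^2((0,\infty);V'_{\mathbb{P}})$), it has a limit, necessarily $0$, which is \eqref{e26}. For $\ell(t,\cdot)=\|\cdot\|_V^2$ the running cost controls $\|y_{rh}\|_{L^2((0,\infty);V_{\mathbb{P}})}^2$, and applying \eqref{e6} with horizon $1$ to an arbitrary admissible pair additionally yields $V_\infty(y)\ge c\|y\|_{H_{\mathbb{P}}}^2$; combining this two-sided bound on $V_\infty$ with the per-step decrease of $V_T$ along the sampling instants produces a contraction $V_T(t_{k+1},y_{rh}(t_{k+1}))\le\rho\,V_T(t_k,y_{rh}(t_k))$ with $\rho\in(0,1)$, and interpolating between sampling instants via \eqref{e5} gives \eqref{e26b} with $\zeta=-\delta^{-1}\log\rho$ and $c_e$ depending only on the constants above. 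I expect Step 3 to be the main obstacle: formulating \eqref{e6} so that the control contribution is genuinely absorbed — this is where $\beta\ge0$ and the finiteness of the actuator family enter — while certifying that $\mu(T)\downarrow0$ uniformly in $t_0$ and over all realizations $\omega$.
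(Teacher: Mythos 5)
Your Steps 1 and 2 do match the paper's strategy (they are exactly properties P1 and P2 in the paper's proof, which then invokes the abstract results of \cite{MR4022734}), but your Step 3 contains a genuine gap for the choice $\ell(t,\cdot)=\|\cdot\|^2_H$. The windowed argument you propose bounds $\|y_T^*(t_0+j)\|^2_{H_{\mathbb{P}}}$ on each unit window through the observability inequality \eqref{e6}, whose right-hand side involves $\|y\|^2_{L^2((\cdot);V_{\mathbb{P}})}$. When the running cost is only $\|\cdot\|^2_H$, the accumulated cost does \emph{not} control this $V_{\mathbb{P}}$-norm integral, so the per-window costs cannot absorb the observability bound and your terminal-state decay $\|y_T^*(t_0+T)\|^2_{H_{\mathbb{P}}}\le\mu(T)V_T(t_0,y_0)$ is not established in that case; since your Step 4 builds \eqref{ed27} on this estimate, the suboptimality chain is unproved precisely in the $H$-case. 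This mismatch between the norm in \eqref{e6} and the norm in the running cost is exactly why the paper treats the two cases asymmetrically: the decrescence bound $V_T(\bar t_0,\bar y_0)\le\gamma_2(T)\|\bar y_0\|^2_{H_{\mathbb{P}}}$ (P1, obtained from the feedback of Theorem \ref{Theo1}) together with existence (P2) already yields \eqref{ed27} for both choices of $\ell$ via \cite[Theorem 6.2]{MR4022734}, while the observability-based lower bound \eqref{e27} (P3) is only formulated, and only needed, for $\ell=\|\cdot\|^2_V$, where it produces the exponential rate \eqref{e26b}. Your Step 3 is therefore not just a different route but one that, as stated, cannot cover half of the theorem; restricted to $\ell=\|\cdot\|^2_V$ it is plausible and close in spirit to P3.

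A second, smaller gap is in your derivation of \eqref{e26}: you invoke absolute continuity of $t\mapsto\|y_{rh}(t)\|^2_{H_{\mathbb{P}}}$ with an $L^1(0,\infty)$ derivative ``by \eqref{e5} and $\partial_t y_{rh}\in L^2((0,\infty);V'_{\mathbb{P}})$'', but \eqref{e5} is a local-in-time estimate with a $T$-dependent constant and gives neither $y_{rh}\in L^2((0,\infty);V_{\mathbb{P}})$ nor a global bound on $\partial_t y_{rh}$. These global bounds have to be derived from the suboptimality inequality itself: the paper first extracts $\int_0^\infty\|y_{rh}\|^2_{H_{\mathbb{P}}}\,dt$ and the global control energy from \eqref{ed27} and \eqref{e71} (estimate \eqref{e28}), then runs a global energy estimate to obtain \eqref{e78}, and finally proves the uniform H\"older-type continuity \eqref{e77} of $t\mapsto\|y_{rh}(t)\|^2_{H_{\mathbb{P}}}$, concluding by the Barbalat-type argument of \cite[Theorem 6.4]{MR4022734}. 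Your argument would need these intermediate global estimates (for the RHC trajectory, not for the auxiliary feedback of Step 1) to be made rigorous.
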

\begin{proof}
Algorithm \ref{RRHA} corresponds  to the receding horizon framework introduced in \cite{MR4022734} for time-varying linear evolution equations adapted to the spaces $V_{\mathbb{P}}\hookrightarrow H_{\mathbb{P}} \hookrightarrow V'_{\mathbb{P}} $.
The stability of this framework is based on the three key conditions which we will verify here.  The rest of the proof follows with the same arguments as given in  \cite[Theorem 2.6]{MR4022734}.

\textbf{P1:}   For every positive number $T$, $V_T$ is globally decrescent with respect to the $H_{\mathbb{P}}$-norm. That is, there exists a continuous,  non-decreasing,  and bounded function $\gamma_2: \mathbb{R}_{\geq 0} \to \mathbb{R}_{\geq 0}$ such that
\begin{equation}
\label{e71}
V_T( \bar{t}_0, \bar{y}_0)  \leq  \gamma_2(T)\| \bar{y}_0\|^2_{H_{\mathbb{P}}}  \quad \text{ for all }  (\bar{t}_0,\bar{y}_0)\in \mathbb{R}_{\geq 0}\times H_{\mathbb{P}}.
\end{equation}
It is sufficient to chose  $N^* \in \mathbb{N}$ as the smallest number for which 
\begin{equation*}
 \beta_{N^*} >  \frac{3c^2\mathcal{N}^2(a,b)}{\underline{\nu}^2}
\end{equation*}
 holds.  In this case,  for almost surly $\omega \in \Omega$,  we have
\begin{equation}
\label{e22}
\beta_{N^*}  > \frac{3c^2\mathcal{N}^2(a,b)}{\nu^2_{\min}(\omega)},
\end{equation}
and  we can use Theorem \ref{Theo1} to verify the stabilizability.   Indeed,  setting  $\bar{u} \in L^2((0,\infty); U^{N}_{\mathbb{P}})$ as in \eqref{e10} for any $N \geq N^*$  and $\lambda \geq \lambda^*$  with
\begin{equation}
\lambda^*  := \frac{1}{2\alpha_1} \left( 4\mu+ \frac{3c^2\mathcal{N}^2(a,b)}{\underline{\nu}} \right) +\frac{\overline{\nu}}{2},
\end{equation}
we obtain \eqref{e13} for the rate
\begin{equation*}
 \mu: = \frac{\underline{\nu}}{8} \left(  \beta_{N^*} - \frac{3c^2\mathcal{N}^2(a,b)}{\underline{\nu}^2}\right).
\end{equation*}
 Further,  for this control we can write
\begin{equation}
\label{e47}
\bar{\mathbf{u}}(\bar{y}(t,\omega))=(\bar{u}_1( \bar{y}(t,\omega)), \dots, \bar{u}_N(\bar{y}(t,\omega)))^T := -\lambda\mathcal{I}P_{\mathcal{O}_{N}}^{ \mathcal{E}_{N}^\perp}  \Delta P_{ \mathcal{E}_{N}}^ { \mathcal{O}^\perp_{N}} \bar{y}(t,\omega),
\end{equation}
where  $\mathcal{I}: \mathcal{O}_N \to  \mathbb{R}^N$ denotes the canonical isomorphism.    For the control  $\bar{\mathbf{u}}$ and its associated state $\bar{y} =y(\bar{\mathbf{u}})$ it holds that
\begin{equation*}
V_T(\bar{t}_0, \bar{y}_0) \leq  \frac{1}{2}\int_{\bar{t}_0}^{\bar{t}_0+T} \mathbb{E}\left[\ell(t, \bar{y}(t))\right]\,dt +\frac{\beta}{2}\int^{\bar{t}_0+T}_{\bar{t}_0} \mathbb{E}\left[ |  \bar{ \mathbf{u}}(t)|_{\ell_2}^2 \right] dt,
\end{equation*}
and,  depending on the choice of  $\ell$,  we have the following cases:

\textit{First case $\ell(t,\cdot)= \| \cdot\|^2_H$}:  Using the fact that
\[\| P_{\mathcal{O}_{N}}^{ \mathcal{E}_{N}^\perp}  \Delta P_{ \mathcal{E}_{N}}^ { \mathcal{O}^\perp_{N}} \|_{\mathcal{L}(H)} \leq c_P    \text{ and }   \|\mathcal{I} \|_{\mathcal{L}(\mathcal{O}_{N},\mathbb{R}^N)} \leq \hat{c}, \]
for positive constants $c_P$ and $\hat{c}$,  we obtain with  \eqref{e47} that
\begin{equation}
\label{e20}
\begin{split}
V_T&(\bar{t}_0, \bar{y}_0)  \leq \frac{1}{2}\int_{\bar{t}_0}^{\bar{t}_0+T} \left( \|\bar{y}(t)\|^2_{H_{\mathbb{P}}}\,dt + \lambda\beta \hat{c}^2 c^2_P \| \bar{y}(t)\|_{H_{\mathbb{P}}}^2 \right) \, dt \\ &  \leq  \frac{1+\lambda\beta \hat{c}^2 c^2_P}{2\mu}\left( 1-e^{-\mu T} \right)\|\bar{y}_0\|^2_{H_{\mathbb{P}}} =: \gamma_2(T) \| \bar{y}_0\|^2_{H_{\mathbb{P}}}.
\end{split}
\end{equation}
\textit{Second case $\ell(t,\cdot)= \| \cdot\|^2_V$}:  In this case,  with standard  energy estimates,  we have for almost every $t \in (\bar{t}_0, \bar{t}_0+T)$ and almost surly $\omega \in \Omega$ that
\begin{equation*}
\begin{split}
\frac{d}{2dt}&\| \bar{y}(t)\|^2_H + \underline{\nu} \| \bar{y}(t)\|^2_V  \leq   c \mathcal{N}(a,b)\|\bar{y}(t) \|_H\| \bar{y}(t)\|_V  +  \| \sum^N_{i =1} \bar{u}(t,\omega)\mathbf{1}_{O_i} \|_{H}\|\bar{y}(t)\|_{H}\\   &  \leq   \frac{1}{2} c_5 \|\bar{y}(t)\|^2_H + \frac{ \underline{\nu}}{2}\|\bar{y}(t)\|^2_V  + \frac{1}{2} | \bar{\mathbf{u}}(t)|_{\ell_2}^2,
\end{split}
\end{equation*}
where
$c_5:=\left(\frac{c^2}{ \overline{\nu}}\mathcal{N}^2(a,b)+N\max_{ 1 \leq i  \leq N} \| \mathbf{1}_{O_i}\|^2_H \right).$
Integrating over $\Omega$ and  $(\bar{t}_0,\bar{t}_0+T)$,  together with  \eqref{e20},  we obtain
\begin{equation}
\label{e21}
\begin{split}
\int^{\bar{t}_0+T}_{\bar{t}_0}& \| \bar{y}(t)\|^2_{V_{\mathbb{P}}} dt \leq \frac{1}{\underline{\nu}}\left(\|\bar{y}(\bar{t}_0)\|^2_{H_{\mathbb{P}}}  +  c_5\int^{\bar{t}_0+T}_{\bar{t}_0} \|\bar{y}(t)\|^2_{H_{\mathbb{P}}} \,dt + \int^{\bar{t}_0+T}_{\bar{t}_0} \mathbb{E} \left[  | \bar{\mathbf{u}}(t)|^2_{\ell_2} \right]  \,dt \right)\\
&\leq \frac{1}{\underline{\nu}}\left(1 +\frac{c_5+\lambda \hat{c}^2 c^2_P}{\mu}(1-e^{-\mu T})  \right) \|\bar{y}_0\|^2_{H_{\mathbb{P}}}.
\end{split}
\end{equation}
Finally,  using \eqref{e21}, we have
\begin{equation*}
\begin{split}
V_T &(\bar{t}_0,\bar{y}_0)  \leq \frac{1}{2}\int_{\bar{t}_0}^{\bar{t}_0+T} \left( \|\bar{y}(t)\|^2_{V_{\mathbb{P}}}\,dt + \lambda\beta \hat{c}^2 c^2_P \| \bar{y}(t)\|_{H_{\mathbb{P}}}^2 \right) \, dt \\ &  \leq  \frac{1}{2\underline{\nu}}\left(1 +\frac{c_5+\lambda(1+\beta \underline{\nu}) \hat{c}^2 c^2_P}{\mu}(1-e^{-\mu T})  \right)\|\bar{y}_0\|^2_{H_{\mathbb{P}}} =: \gamma_2(T) \|\bar{y}_0\|^2_{H_{\mathbb{P}}}.
\end{split}
\end{equation*}

\textbf{P2:}  For every $(\bar{t}_0, \bar{y}_0) \in \mathbb{R}_0 \times H_{\mathbb{P}}$,  every finite horizon optimal control problem of the form  \ref{optT} admits a solution:

  The objective function  $J_{T}(\mathbf{u};\bar{t}_0,y_0)$ is strictly convex,  coercive,  and nonnegative.  Hence it is weakly lower semi-continuous and existence of a unique minimizer to  \ref{optT} follows from the direct method in the calculus of variations, see e.g., \cite[Theorem 1.43]{MR2516528}.

Since  P1 and P2 hold,  we are in the position that we can apply \cite[Theorem 6.2]{MR4022734} and thus \eqref{ed27} holds.  It remains now to show that \eqref{e26} and \eqref{e26b} are satisfied.

First,  we deal with \eqref{e26b}.   This follows using the same arguments given in the second part of  \cite[Theorem 6.2]{MR4022734} together with Property P3 stating:

\textbf{P3:}  For every $T>0$,  $V_T$ is uniformly positive with respect to the $H_{\mathbb{P}}$-norm.  In other words,  for every $T>0$ there exists a constant $\gamma_1(T)>0$ such that we have
\begin{equation}
\label{e27}
V_T(\bar{t}_0,\bar{y}_0)  \geq \gamma_1(T)\|\bar{y}_0\|^2_{H_{\mathbb{P}}}  \quad \text{ for every } (\bar{t}_0,\bar{y}_0)\in \mathbb{R}_{\geq 0} \times H_{\mathbb{P}}.
\end{equation}
We will next verify this property.   For any arbitrary given  $(\bar{t}_0,\bar{y}_0) \in \mathbb{R}_{\geq 0} \times H_{\mathbb{P}}$ and control $\mathbf{u} \in L^2((\bar{t}_0, \bar{t}_0+T); U_{\mathbb{P}}^N)$,  we have by \eqref{e6} that
 \begin{equation*}
\begin{split}
\|\bar{y}_0\|^2_{H_{\mathbb{P}}}  &\leq  c_{2}(1 + T^{-1} +  \mathcal{N}(a,b)) \int^{\bar{t}_0+T}_{\bar{t}_0}\|\bar{y}(t)\|^2_{V_{\mathbb{P}}}dt+ \int^{\bar{t}_0+T}_{\bar{t}_0}\|\sum^{N}_{i =1}\bar{u}_i \mathbf{1}_{O_i} \|^2_{V'_{\mathbb{P}}}dt.
\end{split}
\end{equation*}
Together with the estimate
\begin{equation*}
\begin{split}
\int^{\bar{t}_0+T}_{\bar{t}_0}\|\sum^{N}_{i =1} \bar{u}_i \mathbf{1}_{O_i} \|^2_{V'_{\mathbb{P}}}dt &\leq  i_{H_{\mathbb{P}},V'_{\mathbb{P}}} \int^{\bar{t}_0+T}_{\bar{t}_0}\|\sum^{N}_{i =1} \bar{u}_i \mathbf{1}_{O_i} \|^2_{H_{\mathbb{P}}}dt  \\ &\leq i_{H_{\mathbb{P}},V'_{\mathbb{P}}} N\max_{ 1 \leq i  \leq N} \| \mathbf{1}_{O_i}\|^2_H  \int^{\bar{t}_0+T}_{\bar{t}_0} \mathbb{E} \left[ | \bar{\mathbf{u}}(t)|^2_{\ell_2} \right] \,dt,
\end{split}
\end{equation*}
we obtain \eqref{e27} with $\gamma_1(T) := \left( \max\left\{ 2c_{2}(1+T^{-1}+ \mathcal{N}(a,b) ),  \frac{2}{\beta}( i_{H_{\mathbb{P}},V'_{\mathbb{P}}} N\max_{ 1 \leq i  \leq N} \| \mathbf{1}_{O_i}\|^2_H ) \right \} \right)^{-1} $,
where $i_{H_{\mathbb{P}},V'_{\mathbb{P}}}$ is  the embedding constant from $H_{\mathbb{P}}$ into $V'_{\mathbb{P}}$.  Therefore P3 holds and this completes the verification of  \eqref{e26b}.

Next we prove that  \eqref{e26} holds.  Using \eqref{ed27} and \eqref{e71},  we can write
 \begin{equation}
 \label{e28}
\int^{\infty}_0\|y_{rh}(t)\|^2_{H_{\mathbb{P}}}\,dt  \leq \frac{2\gamma_2(T)}{\alpha} \|y_0\|^2_{H_{\mathbb{P}}}   \quad  \text{ and }    \quad  \int^{\infty}_0 \mathbb{E}\left[ |\mathbf{u}_{rh}(t)|^2_{\ell_2} \right]  \,dt  \leq \frac{2\gamma_2(T)}{\alpha \beta} \|y_0\|^2_{H_{\mathbb{P}}}.
\end{equation}
Further,  with standard energy estimate we have for every $t\geq t_0$ that
\begin{equation*}
\begin{split}
\|y_{rh}(t) \|^2_{H_{\mathbb{P}}}  & + \underline{\nu} \int^{t}_{0} \|y_{rh}(t)\|^2_{V_{\mathbb{P}}}\,dt  \leq \|y_0\|^2_{H_{\mathbb{P}}}  +  \left(\frac{c^2}{\underline{\nu}}\mathcal{N}^2(a,b)+N\max_{ 1 \leq i  \leq N} \| \mathbf{1}_{O_i}\|^2_{H} \right)\int^{\infty}_{0} \|y_{rh}(t)\|^2_{H_{\mathbb{P}}}\,dt \\& + \int^{\infty}_{0} \mathbb{E}\left[|  \mathbf{u}_{rh}(t)|^2_{\ell_2} \right]\,dt \leq c_6\|y_0\|^2_{H_{\mathbb{P}}},
\end{split}
\end{equation*}
where  $c_6 :=\left(1 +\frac{ 2(1+\beta c_5)\gamma_2(T)}{\alpha\beta}\right)$ with
$c_5:= \left(\frac{c^2}{\underline{\nu}}\mathcal{N}^2(a,b)+N\max_{ 1 \leq i  \leq N} \| \mathbf{1}_{O_i}\|^2_{H} \right).$ Thus,  we can conclude that
\begin{equation}
\label{e78}
\|y_{rh}\|^2_{L^{\infty}((0,\infty);H_{\mathbb{P}})} \leq  c_6 \|y_0\|^2_{H_{\mathbb{P}}}    \quad  \text{ and }    \quad   \int^{\infty}_{0} \|y_{rh}(t)\|^2_{V_{\mathbb{P}}} \leq \frac{c_6}{\underline{\nu}} \|y_0\|^2_{H_{\mathbb{P}}}.
\end{equation}
Further, we have for every $t'' \geq t' \geq 0$ that
{\small
\begin{equation}
\begin{split}
\label{e77}
&\|y_{rh}(t'')\|^2_{H_{\mathbb{P}}}-\|y_{rh}(t')\|^2_{H_{\mathbb{P}}}=\int^{t''}_{t'}\frac{d}{dt}\|y_{rh}(t)\|^2_{H_{\mathbb{P}}}dt,\\
&=2\int^{t''}_{t'}  \mathbb{E}\left[ \langle  y_{rh}(t), \nu\Delta y_{rh}(t) - a(t)y_{rh}(t)-\nabla \cdot (b(t)y_{rh}(t))+ \sum^{N}_{i =1} (u_{rh})_i(t) \mathbf{1}_{O_i}\rangle_{V,V'}\right] dt,\\
&\leq-2\underline{\nu}\int^{t''}_{t'}\|y_{rh}(t)\|^2_{V_{\mathbb{P}}}dt\\&+2\int^{t''}_{t'}  \mathbb{E}\left[\langle - a(t)y_{rh}(t)-\nabla \cdot (b(t)y_{rh}(t))+ \sum^{N}_{i =1} (u_{rh})_i(t) \mathbf{1}_{O_i},y_{rh}(t)\rangle_{V,V'} \right] dt,\\
&\leq 2 \mathcal{N}(a,b)\int^{t''}_{t'}\|y_{rh}(t)\|_{V_{\mathbb{P}}}\|y_{rh}(t)\|_{H_{\mathbb{P}}}dt\\&
+2(N\max_{ 1 \leq i  \leq N} \| \mathbf{1}_{O_i}\|^2_{H})^{\frac{1}{2}}\int^{t''}_{t'} \| \mathbf{u}_{rh}(t)\|_{L_{\mathbb{P}}^2(\Omega; \mathbb{R}^N)}   \|y_{rh}(t)\|_{H_{\mathbb{P}}}dt\\
&\leq 2 \mathcal{N}(a,b)\big( \int^{t''}_{t'}\|y_{rh}(t)\|^2_{V_{\mathbb{P}}}dt\big)^{\frac{1}{2}}\big( \int^{t''}_{t'}\|y_{rh}(t)\|^2_{H_{\mathbb{P}}}dt\big)^{\frac{1}{2}}  \\&+2(N\max_{ 1 \leq i  \leq N} \| \mathbf{1}_{O_i}\|^2_{H})^{\frac{1}{2}}\big( \int^{t''}_{t'} \mathbb{E} \left[ |\mathbf{u}_{rh}(t)|^2_{\ell_2}\right]dt\big)^{\frac{1}{2}}\big( \int^{t''}_{t'}\|y_{rh}(t)\|^2_{H_{\mathbb{P}}}dt\big)^{\frac{1}{2}}\leq   c_7 \|y_0\|^2_{H_{\mathbb{P}}}(t''-t')^{\frac{1}{2}},
\end{split}
\end{equation}}
where  $c_7:= 2\left( \mathcal{N}(a,b){\underline{\nu}}^{\frac{-1}{2}}c_{6}  +  (N\max_{ 1 \leq i  \leq N} \| \mathbf{1}_{O_i}\|^2_{H})^{\frac{1}{2}}c^{\frac{1}{2}}_6 \left(\frac{2\gamma_2(T)}{\alpha \beta}\right)^{\frac{1}{2}} \right)$ and \eqref{e28}  and  \eqref{e78} were used in  the last inequality.

The rest of proof follows  the same lines as in  the proof of  \cite[Theorem 6.4]{MR4022734}  based on   \eqref{e77}  and \eqref{e28}.
\end{proof}

\subsection{Failure Probability}
\label{Sec5}
In this section,   we are concerned with the failure probability for the receding horizon framework.  For a given number of actuators $\bar{N}$,  we compute an upper bound for the probability of the case,  in which the stabilizability of  the stochastic RHC  computed by Algorithm \ref{RRHA} with($U:=U^N_{\mathbb{P}}$) and the deterministic variant of Algorithm \ref{RRHA} (\cite[Algorithm 1.1]{MR4022734}) with control  $U:=\mathbb{R}^N$  are not guaranteed.

Concretely,  let  $ \bar{N} \in \mathbb{N}_0$  be a given number of actuators.   Recalling the proof of Theorem \ref{Theo3},  it can be seen that the condition P1 and,  particularity,  inequality \eqref{e22} are essential.  Therefore,   P1 and the stabilizability of the controlled system may fail if 
\begin{equation}
\label{e46}
\nu^2_{\min}(\omega) \beta_{ \bar{N}}    \leq   3c^2\mathcal{N}^2(a,b).
\end{equation}
In this case,  for a given $y_0 \in H$,  the existence of a stabilizing deterministic  control  $ \mathbf{u}_{rh}(\omega) \in L^2((0,\infty);\mathbb{R}^N)$ with respect to $H$-norm  which is suboptimal  in the sense of \eqref{ed27} for 
\begin{equation}
\label{e50}
 \min_{ \mathbf{u} \in L^2((0,\infty);\mathbb{R}^{\bar{N}})} J^{\omega}_{\infty}(\mathbf{u};0,y_0) := \frac{1}{2}\int_{0}^{\infty}\ell(t, y(t))dt+\frac{\beta}{2}\int_{0}^{\infty} |\mathbf{u}(t)|_{\ell_2}^2dt,
\end{equation}
and also,  for any given $y_0 \in H_{\mathbb{P}}$  the existence a stabilizing stochastic control  $ \mathbf{u}_{rh} \in L^2((0,\infty); U_{\mathbb{P}}^{\bar{N}})$ with respect to $H_{\mathbb{P}}$-norm  which is suboptimal for
\begin{equation}
\label{e51}
 \min_{ \mathbf{u} \in L^2((0,\infty);U_{\mathbb{P}}^{\bar{N}})} J_{\infty}(\mathbf{u};0,y_0) = \frac{1}{2}\int_{0}^{\infty} \mathbb{E}\left[ \ell(t, y(t))\right]dt+\frac{\beta}{2}\int_{0}^{\infty}\mathbb{E}\left[ |\mathbf{u}(t)|_{\ell_2}^2 \right]dt,
\end{equation}
are not guaranteed for either of the choices of   $\ell(t,y) = \| y\|^2_V$ and  $\ell(t,y)= \| y\|^2_H$.    Therefore,   the failure probability,   for both of  the above problem formulations,  can be expressed by
\begin{equation}
\begin{split}
\label{e24}
\mathbb{P} \left[\nu^2_{\min}(\omega) \beta_{ \bar{N}}    \leq   3c^2\mathcal{N}^2(a,b)\right] = \mathbb{P}\left[\nu_{\min}(\omega) \leq  3^{\frac{1}{2}}c \mathcal{N}(a,b)\beta^{-\frac{1}{2}}_{ \bar{N}} \right].
\end{split}
\end{equation}
We consider now  both Examples \ref{Examp1} and \ref{Examp2}.

\textbf{Example \ref{Examp1}:} Setting   $\Gamma(\omega):=\sum^{M}_{j= 1}\abs{z_j (\omega)}\| \psi_j\|_{L^{\infty}(D)}$,   we have
\begin{equation}
\label{e25}
\begin{split}
 &\mathbb{P}\left[\nu_{\min}(\omega) \leq 3^{\frac{1}{2}}c \mathcal{N}(a,b)\beta^{-\frac{1}{2}}_{ \bar{N}} \right]  =  \mathbb{P}\left[\exp( -\Gamma(\omega)) \leq  3^{\frac{1}{2}}c \mathcal{N}(a,b)\beta^{-\frac{1}{2}}_{ \bar{N}}-\underline{\nu} \right].
\end{split}
\end{equation}
where  $ 3^{\frac{1}{2}}c \mathcal{N}(a,b)\beta^{-\frac{1}{2}}_{ \bar{N}}-\underline{\nu}\geq 0$ since otherwise \eqref{e46}  is not valid.
Using  \eqref{e12},  \eqref{e24}, \eqref{e25},  and Markov's inequality,  we get
\begin{equation*}
\begin{split}
 &\mathbb{P}\left[\nu_{\min}(\omega) \leq 3^{\frac{1}{2}}c \mathcal{N}(a,b)\beta^{-\frac{1}{2}}_{ \bar{N}} \right]  = \mathbb{P}\left[ - \Gamma(\omega) \leq \log \left(3^{\frac{1}{2}}c \mathcal{N}(a,b)\beta^{-\frac{1}{2}}_{ \bar{N}}-\underline{\nu} \right) \right] \\ & = \mathbb{P}\left[ \Gamma(\omega) \geq \log \left(\frac{1}{3^{\frac{1}{2}}c \mathcal{N}(a,b)\beta^{-\frac{1}{2}}_{ \bar{N}}-\underline{\nu}}\right)    \right]  \leq  \mathbb{E}\left[  e^{ \Gamma} \right] e^{ \log \left(3^{\frac{1}{2}}c \mathcal{N}(a,b)\beta^{-\frac{1}{2}}_{ \bar{N}}-\underline{\nu} \right)} \\ &=   \mathbb{E}\left[  e^{  \Gamma } \right] \left(3^{\frac{1}{2}}c \mathcal{N}(a,b)\beta^{-\frac{1}{2}}_{ \bar{N}}-\underline{\nu} \right) \leq   \mathbb{E}\left[  e^{  \Gamma } \right] \left( \left( \frac{3}{c_{\beta}}\right)^{\frac{1}{2}}c \mathcal{N}(a,b)\bar{N}^{-1}-\underline{\nu} \right).
\end{split}
\end{equation*}
where $\mathbb{E}\left[  e^{  \Gamma } \right]$ is bounded  and without  loss of generality,  we  assumed that $  \beta^{\frac{1}{2}}_{\bar{N}} (1+\underline{\nu}) \geq 3^{\frac{1}{2}}c \mathcal{N}(a,b)$. Here we recall that $ \beta_N \to \infty $ as $N\to \infty$.

\textbf{Example \ref{Examp2}:}  Setting $\Gamma(\omega):=\sum^{\infty}_{j= 1}\abs{z_j (\omega)}\| \psi_j\|_{L^{\infty}(D)}$, we obtain
 \begin{equation}
 \label{e23}
\begin{split}
 &\mathbb{P}\left[\nu_{\min}(\omega) \leq 3^{\frac{1}{2}}c \mathcal{N}(a,b)\beta^{-\frac{1}{2}}_{ \bar{N}} \right] = \mathbb{P}\left[\Gamma(\omega) \geq \nu^* -3^{\frac{1}{2}}c \mathcal{N}(a,b)\beta^{-\frac{1}{2}}_{ \bar{N}}\right],
\end{split}
\end{equation}
 where, recalling that $ \beta_N \to \infty $ as $N\to \infty$,  we assume that  $\nu^* -3^{\frac{1}{2}}c \mathcal{N}(a,b)\beta^{-\frac{1}{2}}_{ \bar{N}}>0$.
Using \eqref{e12},  \eqref{e24}, \eqref{e23},  and Markov's inequality,  we obtain
\begin{equation*}
\begin{split}
&\mathbb{P}\left[\nu_{\min}(\omega) \leq 3^{\frac{1}{2}}c \mathcal{N}(a,b)\beta^{-\frac{1}{2}}_{ \bar{N}} \right]  = \mathbb{P}\left[\Gamma(\omega)  \geq \nu^* -3^{\frac{1}{2}}c \mathcal{N}(a,b)\beta^{-\frac{1}{2}}_{ \bar{N}}\right] \\ & \leq  \frac{\mathbb{E}\left[  \Gamma  \right]}{\nu^* -3^{\frac{1}{2}}c \mathcal{N}(a,b)\beta^{-\frac{1}{2}}_{ \bar{N}}} \leq  \frac{ \sum_{j \geq 1}\mathbb{E}\left[ \abs{z_j} \right] \| \psi_j  \|_{L^{\infty}(D)}}{\nu^* - ( \frac{3}{c_{\beta}})^{\frac{1}{2}}c \mathcal{N}(a,b)\bar{N}^{-1}}.
\end{split}
\end{equation*}

\section{Parabolic PDEs with  log-normal diffusions}
In this section,  we study the case of log-normal diffusions defined by
\begin{equation}
\label{log-N2}
 \nu(\omega,x) = \exp(g(\omega,x))
\end{equation}
with $g$  a Gaussian random field with zero mean.  This class of diffusions is used in many applications, including those related to subsurface flow modeling and hydrology.   More precisely, for each $x \in D$,   $g(x, \cdot)$ is a Gaussian random variable,  and thus $0 < \nu(\omega,x)< \infty$
for each $\omega \in \Omega$. However, for any $\epsilon >0$ we have $\mathbb{P}\lbrack \nu(\cdot,x) > \epsilon^{-1} \rbrack>0$, and thus its corresponding elliptic operator is not uniformly bounded from above over all possible realizations of $\omega$.   We also have $\mathbb{P}\lbrack \nu(\cdot,x) < \epsilon \rbrack>0 $, so the corresponding elliptic operator  is not uniformly elliptic either.
\begin{assumption}
\label{assump2}
Throughout this section,   we impose the following conditions:
\begin{itemize}
\item[A1:]
There are random variables $\nu_{\min}$,  and  $\nu_{\max}$, such that
\begin{equation}
 0<\nu_{\min}(\omega)   \leq \nu(\omega,x) \leq  \nu_{\max}(\omega)  < \infty  \quad  \text{ for a.e.  }  x \in D  \text{ and a.s.  }  \omega \in \Omega,
\end{equation}
where $ \nu_{\max}(\omega) ,  \frac{1}{\nu_{\min}(\omega)}  \in  L^p_{\mathbb{P}}(\Omega; \mathbb{R})$ for $p \in [1, \infty)$.
\item[A2:] We also assume for $a$ and $b$ that
 \begin{equation}
 a \in L^{\infty}((0,\infty)\times D ; \mathbb{R})   \text{ and }  b =0.
 \end{equation}
 \end{itemize}
 \end{assumption}
As an example for the diffusion constant with the log-normal distribution satisfying Assumption \ref{assump2},  we refer to \cite{MR2649152,MR4246090,MR2805155}.
\begin{example}
\label{Examp3}
We set $g(\omega,x):=\sum^{\infty}_{j= 1}z_j(\omega) \psi_j(x)$ in \eqref{log-N2},
where the functions  $\psi_j \in L^{\infty}(D;\mathbb{R})$ for $j = 1,2,\dots$ are such that $\sum^{\infty}_{j = 1} \| \psi_j(x)\|_{L^{\infty}(D;\mathbb{R})}$ is finite and the random variables $z_j$ are i.i.d,  standard normal random variables, that is,  $z_j \sim \mathcal{N}(0,1)$ in $\mathbb{R}$.  For describing the resulting random field we set $\mathcal{F}:=\bigotimes^{\infty}_{j =1} \mathcal{B}(\mathbb{R})$,  where $\mathcal{B}(\mathbb{R})$ is the Borel $\sigma$-algebra in $\mathbb{R}$.  In this case,  the probability measure can be expressed  as the Gaussian product probability,  that is,  $\mathbb{P} := \bigotimes^{\infty}_{j =1} \mathcal{N}(0,1)$.    For the well-posedness of $\nu$ in \eqref{log-N2},  we restrict $z$  to be in the set $\Omega:=\{ z \in \mathbb{R^N}:  \sum^{\infty}_{j =1} |z_j|\|\psi_j \|_{L^{\infty}(D)}<\infty \}$.  In this case,   $\Omega$ is  $\mathcal{F}$-measurable,   and  $\mathbb{P}(\Omega) = 1$ holds.   Further,  for every $\omega \in \Omega$,  the following quantities are well-defined
\begin{equation}
\nu_{\max}(\omega) = \exp(\sum^{\infty}_{j= 1}\abs{z_j(\omega)}\| \psi_j\|_{L^{\infty}(D)}),  \qquad
\nu_{\min}(\omega)  = \exp( - \sum^{\infty}_{j= 1}\abs{z_j (\omega)}\| \psi_j\|_{L^{\infty}(D)}),
\end{equation}
and satisfy A1 in Assumptions \ref{assump2}.  See \cite{zbMATH06030212,MR2805155} for more details.
\end{example}
\subsection{Well-posedness of state equation}
For the log-normally distributed diffusion,  the existence is more delicate.  In fact,  due to  lack of integrability,   we can not  use  the weak formulation \eqref{e7} directly.  In this case,   first,  we show that the solution $y(\omega) \in  W(0,T)$  exists for  $\omega \in \Omega$ a.s.,   then we justify the measurability  and integrability of the mapping   $y: \Omega \to W(0,T)$ with $\omega \mapsto y(\omega)$.  The latter relies on controlling the integrability of all constants in the estimates.
\begin{theorem}
\label{Theo5}
Suppose that  Assumption \ref{assump2} holds.  Then for every given $(t_0,T,y_0, f) \in  \mathbb{R}^2_{>0} \times H \times L^2((t_0,t_0+T);H)$ equation \eqref{e17} admits a unique solution $y \in W_{\mathbb{P}}(t_0,t_0+T)$ satisfying the following estimates
\begin{equation}
\label{e31}
\|y\|^2_{C([t_0,t_0+T];H_{\mathbb{P}})} + \| y \|^2_{W_{\mathbb{P}}(t_0,t_0+T)} \leq c_3\left( \|y_0\|^2_{H} + \| f\|^2_{L^2((t_0,t_0+T);H)} \right), 
\end{equation}
with $c_3$ depending on  $(T,a,b,D,\Omega)$.  Moreover, we have the following observability inequality
\begin{equation}
\label{e32}
\|y_0\|^2_{H} \leq c_{4}\|y\|^2_{L^2((t_0,t_0+T);V_{\mathbb{P}})}+ \|f\|^2_{L^2((t_0,t_0+T);H)},
\end{equation}
with  ${c}_4$ depending only on $(T,a,b,D,\Omega)$.
\end{theorem}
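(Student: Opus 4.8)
The plan is to follow the route announced before the statement: first solve \eqref{e17} pathwise for a.e.\ fixed $\omega$, and then establish the measurability and the square-integrability of $\omega\mapsto y(\omega)$, the latter by keeping explicit track of how every constant in the pathwise estimates depends on $\nu_{\min}(\omega)$ and $\nu_{\max}(\omega)$.

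\textit{Step 1 (pathwise theory).} Let $\mathcal N\in\mathcal F$ be the null set off which A1 of Assumption \ref{assump2} holds, and fix $\omega\in\Omega\setminus\mathcal N$. Then $0<\nu_{\min}(\omega)\le\nu(\omega,x)\le\nu_{\max}(\omega)<\infty$ for a.e.\ $x\in D$, so (recalling $b=0$ and $a\in L^\infty((0,\infty)\times D;\mathbb R)$) the form $(\psi,\varphi)\mapsto\int_D\nu(\omega,\cdot)\nabla\psi\cdot\nabla\varphi\,dx+\langle a(t)\psi,\varphi\rangle_{V',V}$ is bounded on $V$ and satisfies a G{\aa}rding inequality with coercivity constant $\nu_{\min}(\omega)$. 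Hence \eqref{e17} with this frozen $\omega$ is a deterministic uniformly parabolic equation, and the Lions--Galerkin theory (the deterministic analogue of the scheme used in the proof of Theorem \ref{Theo2}, now in $V\hookrightarrow H\hookrightarrow V'$) yields a unique $y(\omega)\in W(t_0,t_0+T)$. Testing with $y(\omega)$ and applying Gronwall's lemma gives $\|y(\omega)\|_{C([t_0,t_0+T];H)}^2\le e^{C(T,a,D)}(\|y_0\|_H^2+\|f\|_{L^2((t_0,t_0+T);H)}^2)$ with exponent independent of $\omega$; integrating the same differential inequality and dividing by $\nu_{\min}(\omega)$ controls $\|y(\omega)\|_{L^2((t_0,t_0+T);V)}^2$ by $C(T,a,D)(1+\nu_{\min}(\omega)^{-1})$ times that right-hand side; and reading off $\partial_t y(\omega)=\nabla\cdot(\nu(\omega)\nabla y(\omega))-a(t)y(\omega)+f$ in $V'$ bounds $\|\partial_t y(\omega)\|_{L^2((t_0,t_0+T);V')}^2$ by a multiple of $\nu_{\max}(\omega)^2\|y(\omega)\|_{L^2((t_0,t_0+T);V)}^2+\|a\|_{L^\infty}^2\|y(\omega)\|_{L^2((t_0,t_0+T);H)}^2+\|f\|_{L^2((t_0,t_0+T);H)}^2$. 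Collecting these, the pathwise analogue of \eqref{e31} holds with a constant $c(\omega)\le C(T,a,D)(1+\nu_{\max}(\omega)^2)(1+\nu_{\min}(\omega)^{-1})$. Finally, arguing as in \cite[Proposition 3.2]{MR4022734} (integrating $\tfrac{d}{dt}\|y(\omega)(t)\|_H^2$ over a subinterval and invoking Poincar\'e's inequality, using $b=0$ and $a\in L^\infty$) yields a pathwise observability inequality $\|y_0\|_H^2\le c_4(\omega)\|y(\omega)\|_{L^2((t_0,t_0+T);V)}^2+\|f\|_{L^2((t_0,t_0+T);H)}^2$ with $c_4(\omega)\le C(T,a,D)(1+\nu_{\max}(\omega))$.

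\textit{Step 2 (measurability).} Each Galerkin approximant $y_m(\omega)$ solves a linear system of ODEs on $[t_0,t_0+T]$ whose time-dependent coefficients are affine functions of the numbers $\int_D\nu(\omega,\cdot)\nabla\phi_i\cdot\nabla\phi_j\,dx$; these depend $\mathcal F$-measurably on $\omega$, since $\nu$ is a jointly measurable random field and $\phi_i\in V$. As solutions of linear ODEs depend continuously, hence measurably, on their coefficients, $\omega\mapsto y_m(\omega)\in W(t_0,t_0+T)$ is strongly measurable. For a.e.\ $\omega$ the bounds of Step 1 are uniform in $m$, so $y_m(\omega)\rightharpoonup y(\omega)$ weakly in $W(t_0,t_0+T)$ along a subsequence and, by pathwise uniqueness, along the full sequence; because $W(t_0,t_0+T)$ is separable, this makes $\omega\mapsto y(\omega)$ weakly measurable and hence, by Pettis' theorem, strongly measurable (we extend it by $0$ on $\mathcal N$).

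\textit{Step 3 (integrability and conclusion).} By A1 we have $\nu_{\max},\nu_{\min}^{-1}\in L^p_{\mathbb P}(\Omega;\mathbb R)$ for every finite $p$, so H\"older's inequality gives $\mathbb E[c(\omega)]=:c_3<\infty$. Since the data $y_0,f$ are deterministic, integrating the pathwise form of \eqref{e31} over $\Omega$ yields $\mathbb E[\|y(\omega)\|_{W(t_0,t_0+T)}^2]\le c_3(\|y_0\|_H^2+\|f\|_{L^2((t_0,t_0+T);H)}^2)$, so $y\in L^2_{\mathbb P}(\Omega;W(t_0,t_0+T))\cong W_{\mathbb P}(t_0,t_0+T)$ by \eqref{e48}; pulling the supremum in $t$ inside the expectation gives the same bound for $\|y\|_{C([t_0,t_0+T];H_{\mathbb P})}^2$, which is \eqref{e31}. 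That this $y$ is the weak solution in the sense of \eqref{e7}, and the only one in $W_{\mathbb P}(t_0,t_0+T)$, follows by testing the pathwise weak identities with functions of the form $\mathbf 1_A(\omega)\phi(t)$, $A\in\mathcal F$, $\phi\in L^2((t_0,t_0+T);V)$, and integrating over $\Omega$, the interchange of integrals being legitimate by the integrability just obtained (uniqueness reduces to pathwise uniqueness by the same localization). For the observability estimate \eqref{e32}, I divide the pathwise inequality of Step 1 by $c_4(\omega)\ge C(T,a,D)>0$ and then take $\mathbb E[\cdot]$: since $c_4(\omega)^{-1}$ is bounded and $\mathbb E[c_4(\omega)^{-1}]>0$, this produces $\|y_0\|_H^2\le(\mathbb E[c_4(\omega)^{-1}])^{-1}\,\mathbb E[\|y(\omega)\|_{L^2((t_0,t_0+T);V)}^2]+\|f\|_{L^2((t_0,t_0+T);H)}^2$, i.e.\ \eqref{e32} with $c_4:=(\mathbb E[c_4(\omega)^{-1}])^{-1}$, depending only on $(T,a,b,D,\Omega)$.

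\textit{Expected main obstacle.} The only real work is the bookkeeping in Steps 1 and 3: one must exhibit the energy and observability constants as explicit (polynomial) expressions in $\nu_{\min}(\omega)^{-1}$ and $\nu_{\max}(\omega)$ --- the $\partial_t y$ bound forces the factor $\nu_{\max}(\omega)^2$ and the $L^2(V)$ bound the factor $\nu_{\min}(\omega)^{-1}$ --- rather than hiding them in generic constants as in the uniformly elliptic case, so that the $L^p_{\mathbb P}$-integrability from A1 is applicable; and, for the observability inequality, one must remember to normalize by the $\omega$-dependent constant \emph{before} integrating, so as to keep the constant in front of $\|f\|^2$ equal to one. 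A secondary technical point is the measurability of $\omega\mapsto y(\omega)$, where separability of $W(t_0,t_0+T)$ and Pettis' theorem are used to pass from the Galerkin approximants to their weak limit.
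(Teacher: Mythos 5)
Your proposal is correct and, in its overall architecture, coincides with the paper's proof: pathwise existence via the uniformly parabolic theory for frozen $\omega$, energy estimates with the constants tracked explicitly as $c(\omega)\sim(1+\nu_{\max}(\omega)^2)/\nu_{\min}(\omega)$ (the paper's $c_c(\omega)$ in \eqref{e33}), integration over $\Omega$ using the $L^p_{\mathbb P}$-integrability from A1 to get \eqref{e31}, and for \eqref{e32} exactly the same trick of dividing the pathwise observability inequality by the $\omega$-dependent constant $c_o(\omega)\sim T^{-1}+\nu_{\max}(\omega)+1+\|a\|_{L^\infty}$ \emph{before} taking the expectation, so the coefficient of $\|f\|^2$ stays equal to one (compare \eqref{e29}). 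The one step where you take a genuinely different route is the measurability of $\omega\mapsto y(\omega)$: the paper, following \cite{MR2649152}, truncates the probability space by $\Omega_n:=\{\nu_{\max}(\omega)<n,\ \nu_{\min}(\omega)>1/n\}$, applies the uniformly elliptic result (Theorem \ref{Theo2}) on each $\Omega_n$ to get measurable solutions $y_n$, extends by zero, and identifies $y$ as the a.s.\ limit of the $y_n$; you instead prove strong measurability of each Galerkin approximant $y_m(\omega)$ (measurable ODE coefficients), pass to the weak limit along the full sequence by pathwise uniqueness, and invoke separability of $W(t_0,t_0+T)$ plus Pettis' theorem. Both arguments are sound; the paper's truncation has the advantage of recycling the already-proved uniform-diffusion theorem verbatim, while yours stays entirely inside the Galerkin construction and avoids introducing the exhausting sequence $\Omega_n$, at the cost of having to justify measurable dependence of the ODE solutions on $\omega$ and the weak-limit/Pettis step. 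Your additional remarks on uniqueness in $W_{\mathbb P}$ via test functions $\mathbf 1_A(\omega)\phi(t)$ address a point the paper leaves implicit and are consistent with its formulation \eqref{e7}.
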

\begin{proof}
The proof is mainly inspired by the arguments  given  in \cite{MR2649152} for the well-posedness of the elliptic PDEs with log-normal diffusion.  It follows the following steps: First,  for each $\omega \in \Omega$ a.s.,  we consider  the unique solution $y(\omega) \in W(0,T)$ and derive the estimates for the resulting family of solutions.  Next,  we show that  $y(\omega)$ is measurable with respect to $\omega$.  Finally,  we show the integrability of this solution.  Throughout,  $c$ is a generic constant that is independent of $\omega$.

Using  standard arguments for deterministic parabolic PDEs,  it can be shown that for  $\omega \in \Omega$  a.s.,   \eqref{CS} admits a unique weak solution $y(\omega)$ with
\begin{equation}
\label{e35}
\begin{split}
\frac{d}{2dt} \| y(\omega,t)\|^2_{H}+\nu_{\min}(\omega) \| y(\omega,t)\|^2_V  &\leq \|a\|_{L^{\infty}((0,\infty)\times D ; \mathbb{R})}\|y(\omega,t)\|^2_H+ (f(t),y(\omega,t))_H\\
& \leq \frac{ \left(1+2\|a\|_{L^{\infty}((0,\infty)\times D ; \mathbb{R})} \right)}{2} \|y(\omega,t)\|^2_H+ \frac{1}{2} \|f(t)\|^2_H.
 \end{split}
\end{equation}
Using Gronwall's inequality we obtain for every $t  \in [t_0,t_0+T]$ that
\begin{equation*}
\begin{split}
\|y(\omega,t)\|^2_H&+2\nu_{\min}(\omega)\| y(\omega)\|^2_{L^2((t_0, t); V)} \\& \leq c \exp(T(1+2\|a\|_{L^{\infty}((0,\infty)\times D ; \mathbb{R})}))\left(\|y_0\|^2_H+\|f\|^2_{L^2((t_0,t_0+T);H)}   \right).
\end{split}
\end{equation*}
Therefore,  we can infer that
\begin{equation}
\label{e30}
\| y(\omega)\|^2_{L^2((t_0,t_0+T);V)} \leq \frac{c}{\nu_{\min}(\omega)}\left(\|y_0\|^2_H+\|f\|^2_{L^2((t_0,t_0+T);H)}   \right).
\end{equation}
Similarly,  we can write
\begin{equation*}
\begin{split}
\|\partial_ty(\omega)\|^2_{L^2((t_0,t_0+T);V')} &\leq c\left( \nu_{\max}^2(\omega)+ \|a\|^2_{L^{\infty}((0,\infty)\times D ; \mathbb{R})} \right)\| y(\omega)\|^2_{L^2((t_0,t_0+T);V)} + c\|f\|^2_{L^2((t_0,t_0+T);H)}\\
&\leq \frac{c(1+\nu_{\max}^2(\omega))}{\nu_{\min}(\omega)}\left(\|y_0\|^2_H+\|f\|^2_{L^2((t_0,t_0+T);H)}   \right)
\end{split}
\end{equation*}
and thus,  together with  \eqref{e30} we have
\begin{equation}
\label{e33}
\|y(\omega)\|^2_{W(0,T)} \leq c_c(\omega) \left(\|y_0\|^2_H+\|f\|^2_{L^2(t_0,t_0+T;H)}   \right),
\end{equation}
where $c_c(\omega):=\frac{c(1+\nu_{\max}^2(\omega))}{\nu_{\min}(\omega)}$.

Next,  we show the measurability of  the solution operator.  To do this,  we show that $y$ is a.s. the limit of a sequence of measurable functions.   One can adapt the proof given
in  \cite[Theorem 3.4]{MR2649152}.  Hence,  we just sketch its main idea here.  We first define,  for every $n  \in \mathbb{N}$,  the sequence  $\Omega_n \subset \Omega$ for which the diffusion coefficient is uniformly bounded
\begin{equation}
\Omega_n :=\{ \omega \in \Omega :   \nu_{\max}(\omega) < n  \text{ and } \nu_{\min}(\omega)>\frac{1}{n} \} \subset \Omega.
\end{equation}
Then  $\{ \Omega_n \}_n$ is increasing with $\Omega=\cup_{n\in\mathbb{N}} \Omega_n$.  Further,  invoking Theorem \ref{Theo2},  it follows that \eqref{CS} has the  unique solution   $y_n \in L^2_{\mathbb{P} |_{\Omega_n} }(\Omega; W(t_0,t_0+T))$ where  $\mathbb{P} |_{\Omega_n}$ stands for the restriction of  $\mathbb{P}$ to $\Omega_n$.
 This solution can be extended by zero to the solution defined on the whole of  $\Omega$,
  which  we denote again by $y_n$.   By definition,   $y_n$ is  measurable.  Further,  due to the fact that  $\Omega=\cup_{n\in\mathbb{N}} \Omega_n$, that  $y_n$  solves \eqref{CS} for $\omega \in \Omega_n$ a.s.,   and by uniqueness of the solution,    the function $y$ is the a.s.  limit of measurable functions $\{ y_n\}_n$,  and thus it is also measurable.
%
%

Now, we are in a position where we can show the integrability of the solution.  Using  A1  in Assumption \ref{assump2},  we  integrate over   $\Omega$ both of  sides of \eqref{e33},  and conclude that
\begin{equation*}
\|y\|^2_{L^p(\Omega;W(0,T))} \leq  \tilde{c}(p)\left(\|y_0\|^2_H+\|f\|^2_{L^2((t_0,t_0+T);H)}   \right),
\end{equation*}
 for every $p \in [1, \infty)$, and a constant  $\tilde{c}(p)>0$ depending on $p$,   using that  $c_c(\omega)$ is $p$-integrable.  In particular,  due to \eqref{e48},   inequality \eqref{e31} holds for $p =2$ with  a constant $c_3:=\tilde{c}(2)>0$.

 We next verify the observability estimate \eqref{e32}.    Multiplying \eqref{e17} by $\frac{T+t_0-t}{T}y(t)$,   integrating over $(t_0,t_0+T)$,  and using Young's inequality,  we obtain for   $\omega \in \Omega$ a.s.
\begin{equation}
\label{e29}
\begin{split}
&\|y_0 \|^2_H =\frac{1}{T} \int^{t_0+T}_{t_0}\| y(\omega, t) \|^2_H\,dt\\&+ 2\int^{t_0+T}_{t_0}\frac{t_0+T-t}{T}\Bigl( \nu_{\max}(\omega) \|y(\omega, t)\|^2_{V} + \langle a(t)y(\omega,t),y(\omega,t) \rangle_{V',V}-\langle f(t),y(\omega,t)\rangle_{V',V}  \Bigr)dt
\\& \leq c\left( T^{-1} +\nu_{\max}(\omega)+1+ \|a\|_{L^{\infty}((0,\infty)\times D ; \mathbb{R})} \right)\int^{t_0+T}_{t_0}\| y(\omega,t)\|^2_V\,dt +\int^{t_0+T}_{t_0}\| f(t)\|^2_{H}\,dt,
\end{split}
\end{equation}
where $c>0$ depends only on $D$.  Setting  $c_o(\omega):=c\left( T^{-1} +\nu_{\max}(\omega)+1+ \|a\|_{L^{\infty}((0,\infty)\times D ; \mathbb{R})} \right)$,  dividing \eqref{e29} by $c_o(\omega)$,  and integrating over $ \Omega$,  we obtain \eqref{e32} and the proof is complete.
\end{proof}

\subsection{ Stability of deterministic RHC}
According to Theorem \ref{Theo5},   the well-posedness
of the state in  $W_{\mathbb{P}}(0,T)$  for the log-normal diffusion,  is justified only for deterministic initial  and forcing functions. Due to the lack of integrability (see \eqref{e33}),   it is not clear how it can be extended for the random fields as initial and forcing functions.
 Therefore,   first,  at any time instances $t_i$  of the receding horizon framework,  we turn the random fields $y_{rh}( t_j, \cdot, \cdot)$  to a deterministic initial function $\bar{y}_0(\cdot) = \left( \mathbb{E} \left[y_{rh}(t_j, \cdot)^2\right] \right)^{\frac{1}{2}}$  by computing average of the squared function with respect to $\omega$.  Then we plug this deterministic initial function into the online open-loop problem.  In this regard,  we have modified and changed Algorithm \ref{RRHA}  to develop Algorithm \ref{RRHA_l}.   Further,  we need to restrict ourselves here to a deterministic control and, thus,  for every $ \bar{y}_0 \in H $ we consider the following performance index
\begin{equation}
\label{e36}
J_{T}(\mathbf{u};\bar{t}_0, \bar{y}_0):= \frac{1}{2}\int_{\bar{t}_0}^{\bar{t}_0+T} \mathbb{E}\left[ \| y(t)\|^2_V\right]\,dt +\frac{\beta}{2}\int^{\bar{t}_0+T}_{\bar{t}_0} |\mathbf{u}(t)|_{\ell_2}^2  dt.
\end{equation}
In the next theorem,  we investigate the stability of the control obtained by Algorithm \ref{RRHA_l} for $U:=\mathbb{R}^N$ and $\ell(t,y) := \|y\|^2_V$ .
\begin{algorithm}[htbp]
\caption{Robust RHC($\delta,T$) for the log-normal diffusion }\label{RRHA_l}
\begin{algorithmic}[5]
\REQUIRE{The sampling time $\delta$,  the prediction horizon $T\geq \delta$,   and the initial state $y_0$}
\ENSURE{The stability of the RHC~$\mathbf{u}_{rh}$.}\\
We proceed through the steps of Algorithm \ref{RRHA} except that Steps 1, 4,  and 5 are replaced by:
\STATE 1.  Compute~$\mathbb{E} \left[y^2_{0}(x)\right]$ for $x \in D$  and  set~$(\bar{t}_0,\bar{y}_0):=\left(0, \left( \mathbb{E} \left[ y^2_{0}\right] \right)^{\frac{1}{2}}\right)$  and  $y_{rh}(0) =y_0 $;
\STATE  4.   Compute   $ \mathbb{E} \left[y_{rh}(\bar{t}_0 +\delta,x)^2\right]$ of the state  for any  $x \in D$  at time $\bar{t}_0+ \delta$;
\STATE 5.  Update: $(\bar{t}_0,\bar{y}_0)  \leftarrow \left(\bar{t}_0 +\delta, \left( \mathbb{E} \left[y_{rh}(\bar{t}_0+\delta)^2 \right] \right)^{\frac{1}{2}}\right)$;
\end{algorithmic}
\end{algorithm}

\begin{theorem}
\label{Theo4}
Suppose that Assumption  \ref{assump2} holds and
\begin{equation}
\label{e34}
\nu_{\min}(\omega)+ \essinf\{ a(t,x):   (t,x) \in (0,\infty)\times D  \}>0, \text{  for }      \omega \in \Omega \text{ a.s.}.
\end{equation}
Then,  for  $N\geq 1$  with the set of actuators $\{  \mathbf{1}_{O_i} \}^{N}_{i = 1}$  given in Section \ref{Sec3} ,  Algorithm \ref{RRHA_l} for  $\ell(t,\cdot): = \| \cdot \|^2_V$  is  suboptimal and stabilizing.  That is,   for every given $\delta>0$,  there exist numbers $T^* > \delta$, and $\alpha \in (0,1)$,  such that for every fixed  prediction horizon $T \geq T^*$, and every $y_0 \in H_{\mathbb{P}}$,  the RHC $\mathbf{u}_{rh} \in L^2((0,\infty);\mathbb{R}^N) $ satisfies the suboptimality inequality  \eqref{ed27}  and exponential stability result \eqref{e26b}.
\end{theorem}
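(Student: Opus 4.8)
The plan is to mirror the proof of Theorem \ref{Theo3}: recast Algorithm \ref{RRHA_l} as an instance of the abstract receding horizon framework of \cite{MR4022734}, now with the \emph{deterministic} space $H$ in the role of the measurement space, and verify the three structural properties (decrescence, existence, positivity) used there. The observation that makes the surrogate in Steps~1, 4 and 5 of Algorithm \ref{RRHA_l} admissible is that it preserves the relevant energy: for any $z\in H_{\mathbb{P}}$ one has $\|(\mathbb{E}[z^2])^{1/2}\|_H^2=\int_D\mathbb{E}[z(x)^2]\,dx=\|z\|_{H_{\mathbb{P}}}^2$, so that, writing $\bar y_0^{(k)}:=(\mathbb{E}[y_{rh}(t_k)^2])^{1/2}$, we always have $\|\bar y_0^{(k)}\|_H=\|y_{rh}(t_k)\|_{H_{\mathbb{P}}}$. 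Hence it suffices to work with the finite horizon value function $V_T:\mathbb{R}_{\geq 0}\times H\to\mathbb{R}_{\geq 0}$ associated with the performance index \eqref{e36} and with the state equation \eqref{e17} driven by deterministic data and deterministic controls; once the three properties below hold, \eqref{ed27} and \eqref{e26b} follow as in \cite[Theorems~6.2 and 6.4]{MR4022734}.

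Property P1 (decrescence) requires no lower bound on the number of actuators, which is why $N\geq 1$ is admissible here: taking $\mathbf{u}\equiv 0$ and letting $\tilde y$ solve \eqref{e17} with $f=0$ and deterministic initial datum $\bar y_0\in H$, estimate \eqref{e31} of Theorem \ref{Theo5} gives $\int_{\bar t_0}^{\bar t_0+T}\mathbb{E}[\|\tilde y(t)\|_V^2]\,dt\leq\|\tilde y\|_{W_{\mathbb{P}}(\bar t_0,\bar t_0+T)}^2\leq c_3\|\bar y_0\|_H^2$, hence $V_T(\bar t_0,\bar y_0)\leq J_T(0;\bar t_0,\bar y_0)\leq\frac{c_3}{2}\|\bar y_0\|_H^2=:\gamma_2(T)\|\bar y_0\|_H^2$. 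Property P3 (positivity) follows from the observability inequality \eqref{e32}: for any admissible $\mathbf{u}$ with associated state $y$, writing $f=\sum_{i=1}^N u_i\mathbf{1}_{O_i}$ and using $\|f(t)\|_H^2\leq N\max_{1\leq i\leq N}\|\mathbf{1}_{O_i}\|_H^2\,|\mathbf{u}(t)|_{\ell_2}^2$ one obtains $\|\bar y_0\|_H^2\leq c_4\int_{\bar t_0}^{\bar t_0+T}\mathbb{E}[\|y(t)\|_V^2]\,dt+N\max_{1\leq i\leq N}\|\mathbf{1}_{O_i}\|_H^2\int_{\bar t_0}^{\bar t_0+T}|\mathbf{u}(t)|_{\ell_2}^2\,dt$, and taking the infimum over $\mathbf{u}$ yields $V_T(\bar t_0,\bar y_0)\geq\gamma_1(T)\|\bar y_0\|_H^2$ with $\gamma_1(T)=(\max\{2c_4,\frac{2}{\beta}N\max_{1\leq i\leq N}\|\mathbf{1}_{O_i}\|_H^2\})^{-1}$. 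Property P2 (existence of finite horizon optima) is obtained exactly as in Theorem \ref{Theo3} from strict convexity, coercivity and weak lower semi-continuity of $\mathbf{u}\mapsto J_T(\mathbf{u};\bar t_0,\bar y_0)$ via the direct method.

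The hypothesis \eqref{e34} enters when controlling the growth of the effective state across sampling intervals. Since $b=0$, testing \eqref{e17} with the solution and integrating over $\Omega$ gives $\frac{d}{2dt}\|y(t)\|_{H_{\mathbb{P}}}^2+\mathbb{E}[\nu_{\min}\|y(t)\|_V^2]+\mathbb{E}[\langle a(t)y(t),y(t)\rangle_{V',V}]\leq(\text{control term})$, and \eqref{e34} together with Poincar\'e's inequality makes the sum of the second and third terms nonnegative, so each realization — and in particular the receding horizon trajectory driven by $\mathbf{u}\equiv 0$ or by an optimal control — is non-expansive in $H$ up to the contribution of the control. This non-expansiveness, combined with the identity $\|\bar y_0^{(k)}\|_H=\|y_{rh}(t_k)\|_{H_{\mathbb{P}}}$ and with property P1 (which bounds $V_T$ after the reset by $\gamma_2(T)\|y_{rh}(t_k)\|_{H_{\mathbb{P}}}^2$), is what allows the relaxed dynamic programming inequality of \cite[Theorem~6.2]{MR4022734} to be run with the measurement taken in the $\|\cdot\|_{H_{\mathbb{P}}}$-norm, giving \eqref{ed27}. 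Finally, with \eqref{ed27} at hand one proceeds as in the last part of the proof of Theorem \ref{Theo3}: suboptimality and P1 give finiteness of $\int_0^\infty\mathbb{E}[\|y_{rh}(t)\|_V^2]\,dt$ and $\int_0^\infty\mathbb{E}[|\mathbf{u}_{rh}(t)|_{\ell_2}^2]\,dt$; a standard energy estimate, again using \eqref{e34}, yields boundedness of $y_{rh}$ in $L^\infty((0,\infty);H_{\mathbb{P}})$ together with a uniform H\"older modulus of continuity of $t\mapsto\|y_{rh}(t)\|_{H_{\mathbb{P}}}^2$; and combining this with the time summability and the lower bound P3 produces the uniform exponential decay \eqref{e26b}.

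I expect the main obstacle to be precisely the reconciliation described in the third paragraph: the dynamic programming machinery of \cite{MR4022734} is formulated for a genuine flow on a single state space, whereas here the state equation is only well posed (Theorem \ref{Theo5}) for deterministic data, so the receding horizon iterate must be reset to a deterministic surrogate at every sampling instant. One has to check carefully that convexity and $2$-homogeneity of $V_T(\bar t_0,\cdot)$, the norm identity $\|\bar y_0^{(k)}\|_H=\|y_{rh}(t_k)\|_{H_{\mathbb{P}}}$, and the dissipativity furnished by \eqref{e34} together compensate for the resulting loss of an exact Bellman relation at the sampling times, uniformly over $\omega$ despite the lack of uniform ellipticity.
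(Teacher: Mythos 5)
Your overall architecture matches the paper's: verify P1--P3 for the deterministic-data problem, use the identity $\|(\mathbb{E}[z^2])^{1/2}\|_H=\|z\|_{H_{\mathbb{P}}}$ to transfer the bounds to $y_{rh}(t_k)$ at the sampling instants, and invoke the abstract results of \cite{MR4022734}; your P2 and P3 are essentially the paper's arguments. The genuine gap is in P1. You set $\gamma_2(T):=c_3/2$ with $c_3$ the constant of the a priori estimate \eqref{e31}; but that constant comes from a Gronwall bound of the form $c\exp\bigl(T(1+2\|a\|_{L^{\infty}((0,\infty)\times D;\mathbb{R})})\bigr)$ and therefore grows exponentially in $T$. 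Property P1 requires $\gamma_2$ to be a \emph{bounded} function of $T$ --- this is what allows one to choose $T^*$ so that the suboptimality degree $\alpha$ lies in $(0,1)$ for all $T\geq T^*$ --- and an exponentially growing $\gamma_2$ destroys the relaxed dynamic programming estimate. This is exactly where hypothesis \eqref{e34} is needed and where the paper uses it: with $\hat\nu(\omega):=\nu_{\min}(\omega)+\essinf a>0$ a.s., the uncontrolled trajectory satisfies $\|\bar y(\omega,t)\|_H^2\leq e^{-2\hat\nu(\omega)(t-\bar t_0)}\|\bar y_0\|_H^2$, whence
\begin{equation*}
\|\bar y(\omega)\|^2_{L^2((\bar t_0,\bar t_0+T);V)}\leq \Bigl(\tfrac{1}{2\nu_{\min}(\omega)}+\tfrac{\|a\|_{L^{\infty}((0,\infty)\times D;\mathbb{R})}}{2\hat\nu(\omega)\nu_{\min}(\omega)}\Bigr)\|\bar y_0\|_H^2
\end{equation*}
with a constant independent of $T$; integrating over $\Omega$, using the $p$-integrability of $1/\nu_{\min}$ from A1, yields a $T$-uniform $\gamma_2$. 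You instead spend \eqref{e34} on a ``non-expansiveness across sampling intervals'' argument; that observation is not wrong, but it does not repair P1, and the reconciliation at the sampling times is already handled by the norm identity together with P1 and P3 evaluated at $(\mathbb{E}[y_{rh}(t_k)^2])^{1/2}$.

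A secondary, minor point: for the running cost $\ell=\|\cdot\|_V^2$, the exponential decay \eqref{e26b} follows from P1, P3 and the geometric decay of $V_T(t_k,y_{rh}(t_k))$ along the sampling instants, as in the second part of \cite[Theorem 6.2]{MR4022734}; the H\"older-modulus argument you sketch at the end is the device used for the $\|\cdot\|_H^2$ cost in Theorem \ref{Theo3} to obtain \eqref{e26}, and is not the mechanism here.
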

\begin{proof}
The proof is similar to the one of Theorem \ref{Theo2} and is based on the arguments given in  \cite[Theorem 2.6]{MR4022734}.  To be more precise,  we  need again to verify the properties P1-P3 given in the proof of Theorem \ref{Theo2} with respect to the $H$-norm in place of  the $H_{\mathbb{P}}$-norm.   After verifying  P1-P3,  according to the construction (see Steps 1, 4, and 5) of Algorithm \ref{RRHA_l},  it can be easily shown that \eqref{e71} and  \eqref{e27} hold at every time instance $t_i = i\delta$ for $y_{rh}(t_i)$ with respect to the $H_{\mathbb{P}}$-norm,  that is,   $V_T(t_i,y_{rh}(t_i))  \leq  \gamma_2(T)\|y_{rh}(t_i)\|^2_{H_{\mathbb{P}}}$  and $V_T(t_i,y_{rh}(t_i))  \geq  \gamma_1(T)\|y_{rh}(t_i)\|^2_{H_{\mathbb{P}}}$  hold for every $i = 0,1,2,\dots$.  The rest of the proof can be completed along the routine of the proof of \cite[Theorem 2.6]{MR4022734}.  Therefore,  we will confine ourselves here to the justification of properties P1-P3.

 To verify  P1,   we set $\bar{\mathbf{u}}:= 0$ in \eqref{CS}  and define $\hat{\nu}(\omega) :=\nu_{\min}(\omega)+ \essinf\{ a(t,x):   (t,x) \in (0,\infty)\times D  \}$.   Then using the standard energy estimates,  we obtain for any $(\bar{t}_0, T) \in \mathbb{R}^2_{\geq 0}$    and   $\bar{y}_0 \in H$ that
\begin{equation*}
\frac{d}{2dt}\| \bar{y}(\omega,t)\|^2_H + \hat{\nu}(\omega)\|\bar{y}(\omega, t)\|^2_H  \leq 0  \quad    \text{ for almost every } t\geq \bar{t}_0,
\end{equation*}
Thus, we have for  $\omega \in \Omega$  a.s.  that
\begin{equation}
\label{e38}
\|\bar{y}(\omega,t)\|^2_H  \leq  e^{-2 \hat{\nu}(\omega)(t-\bar{t}_0)}  \|\bar{y}_0\|^2_H.
\end{equation}
Integrating over $(\bar{t}_0,  \bar{t}_0+T)$, we obtain
\begin{equation*}
\|\bar{y}(\omega)\|^2_{ L^2((\bar{t}_0,  \bar{t}_0+T);H)}  \leq \left(\frac{1-e^{-2\hat{\nu}(\omega)T}}{2 \hat{\nu}(\omega)}     \right)\|\bar{y}_0\|^2_H \leq  \frac{1}{2\hat{\nu}(\omega)}\|\bar{y}_0\|^2_H.
\end{equation*}
Thus,  similarly to \eqref{e35},  we can write
\begin{equation*}
\begin{split}
 \|\bar{y}(\omega)\|^2_{L^2((\bar{t}_0,  \bar{t}_0+T);V)}& \leq  \frac{1}{2\nu_{\min}(\omega)}  \|\bar{y}_0\|^2_{H}+ \frac{\|a\|_{L^{\infty}((0,\infty)\times D ; \mathbb{R})}}{\nu_{\min}(\omega)} \|\bar{y}(\omega)\|^2_{L^2((\bar{t}_0,  \bar{t}_0+T);H)}\\& \leq \left(\frac{1}{2\nu_{\min}(\omega)} + \frac{\|a\|_{L^{\infty}((0,\infty)\times D ; \mathbb{R})}}{2\hat{\nu}(\omega)\nu_{\min}(\omega)}\right) \|\bar{y}_0\|^2_{H}.
 \end{split}
\end{equation*}
Integrating over $\omega \in \Omega $,  we obtain
\begin{equation}
\label{e37}
\|\bar{y}\|^2_{L^2((\bar{t}_0,  \bar{t}_0+T);V_{\mathbb{P}}))} =\|\bar{y}\|^2_{L^2_{\mathbb{P}}(\Omega ;L^2((\bar{t}_0,  \bar{t}_0+T);V))} \leq c_s \|\bar{y}_0\|^2_H,
\end{equation}
 where the integrability of $\frac{1}{\nu_{\min}(\omega)}$ and $\frac{1}{\hat{\nu}(\omega)\nu_{\min}(\omega)}$  is justified due to \eqref{e34} and A1 in Assumption \ref{assump2}.  Setting $\bar{\mathbf{u}}:= 0$ in \eqref{e36}  and using  \eqref{e37},   we arrive at
\begin{equation*}
V_T(\bar{t}_0, \bar{y}_0) \leq  \frac{1}{2}\int_{ \bar{t}_0}^{ \bar{t}_0+T} \mathbb{E}\left[ \|\bar{y}(t)\|^2_V\right] \,dt +\frac{\beta}{2}\int^{ \bar{t}_0+T}_{\bar{t}_0}  | \bar{ \mathbf{u}}(t)|_{\ell_2}^2  dt \leq\frac{c_s}{2} \|\bar{y}_0\|^2_H ,
\end{equation*}
for a positive constant $c_s>0$.
P2 and P3 follow with similar arguments as in the proof of Theorem \ref{Theo2} using the inequalities \eqref{e31} and \eqref{e32},  respectively.
\end{proof}

\begin{remark}
Condition \eqref{e34} might be considered restrictive from the stabilizability point of view. However, while the uncontrolled system is stable,  the exponential stability is not clear due to the lack of integrability (see \eqref{e38}), since $\hat \nu$ is not uniformly bounded away from $0$.   Further,  since $\nu_{\min}(\omega)$ can be really small and arbitrarily close to zero for some realization of $\omega \in \Omega$,  the stability can be quite slow.  However,  using Algorithm \ref{RRHA_l} we are able to stabilize the system exponentially independent of all the perturbations of the dynamics caused by all possible realizations of the random variable $\nu$.
\end{remark}

\subsection{Failure probability}
Similarly to Section \ref{Sec5},  under Assumption \ref{assump2},  we derive an upper bound for the probability,  where the stabilizability of receding horizon framework for both problems \eqref{e50} and \eqref{e51} is not guaranteed.   Due to \eqref{e39} in Remark \ref{Rem1},  the stabilizability may fail if
\begin{equation*}
\nu_{\min}(\omega) \beta_{ \bar{N}}    \leq   3\|a\|_{ L^{\infty}((0,\infty)\times D ; \mathbb{R})},
\end{equation*}
holds.  Thus,   setting  $\Gamma(\omega):=  \sum^{\infty}_{j= 1}\abs{z_j (\omega)}\| \psi_j\|_{L^{\infty}(D)}$,  we can write for the diffusion defined in Example \ref{Examp3} that
\begin{equation*}
\begin{split}
\mathbb{P}& \left[\nu_{\min}(\omega) \beta_{ \bar{N}}    \leq   3\|a\|_{ L^{\infty}((0,\infty)\times D ; \mathbb{R})}\right]   = \mathbb{P}\left[\nu_{\min}(\omega) \leq  3\|a\|_{ L^{\infty}((0,\infty)\times D ; \mathbb{R})}\beta^{-1}_{ \bar{N}} \right]   \\ & = \mathbb{P}\left[ - \Gamma(\omega) \leq \log \left(3\|a\|_{ L^{\infty}((0,\infty)\times D ; \mathbb{R})}\beta^{-1}_{ \bar{N}} \right) \right]  = \mathbb{P}\left[ \Gamma(\omega)  \geq \log \left(\frac{\beta_{ \bar{N}}}{3\|a\|_{ L^{\infty}((0,\infty)\times D ; \mathbb{R})}}\right)    \right].
\end{split}
\end{equation*}
Further,  for every $\kappa_0>0$,  we can write
\begin{equation}
\label{e40}
\mathbb{P}  \left[ \Gamma(\omega) \geq \log \left(\frac{\beta_{ \bar{N}}}{3\|a\|_{ L^{\infty}((0,\infty)\times D ; \mathbb{R})}}\right)    \right] \leq  \mathbb{E}\left[ e^{\kappa_0\Gamma^2}   \right] e^{ -\kappa_0\log \left(\frac{\beta_{ \bar{N}}}{3\|a\|_{ L^{\infty}((0,\infty)\times D ; \mathbb{R})}} \right)^2}.
\end{equation}
where we have used the Markov inequality in the last step and we have assumed that
$\beta_{ \bar{N}} \geq 3\|a\|_{ L^{\infty}((0,\infty)\times D ; \mathbb{R})}$.
Note that due to Fernique's theorem \cite[Theorem 2.7]{MR3236753},  there exists $\kappa_0 >0$  such that the expectation of the double exponential in the left-hand side of \eqref{e40} is bounded.  Then, we can conclude that
\begin{equation}
\label{e49}
e^{-\kappa_0  \log\left(\frac{\beta_{ \bar{N}}}{3\|a\|_{ L^{\infty}((0,\infty)\times D ; \mathbb{R})}} \right)^2} = \mathcal{O}(\beta_{ \bar{N}}^{-p}(3\|a\|_{ L^{\infty}((0,\infty)\times D ; \mathbb{R})})^p )
\end{equation}
for every $p\in [1,\infty)$.  This follows from the observation that for every $\kappa>0$ and $p, x \in [1,\infty)$ we can write
\begin{equation*}
\begin{aligned}
    e^{\kappa \log(x)^2}
    \geq C_p x^p
    &\Leftrightarrow
    \kappa \log(x)^2 \geq
    \log(C_p) + p\log(x)
    &\Leftrightarrow
    \kappa \log(x)^2- p\log(x) \geq \log(C_p)
\end{aligned}
\end{equation*}
for a constant  $C_p >0$ depending only on $p$.   Further,  the function $f(y) = \kappa y^2 - py$ is lower bounded by $-p/(4\kappa)$,  that  is,  $ \min_{y\in \mathbb{R}} \kappa y^2 - py = -p^2/(4\kappa)$. Then,  it follows that for every $p, x\in [1,\infty)$,   we have $ e^{\kappa \log(x)^2}
    \geq e^{-\frac{p^2}{4\kappa}} x^p. $ Hence,  the existence  of  $C_p$ is justified by setting $C_p:=e^{-\frac{p^2}{4\kappa}}$.  Finally,  using \eqref{e12} and  \eqref{e49},  we conclude that
\begin{equation*}
\mathbb{P} \left[\nu_{\min}(\omega) \beta_{ \bar{N}}    \leq   3\|a\|_{ L^{\infty}((0,\infty)\times D ; \mathbb{R})}\right] =\mathcal{O}( \bar{N}^{-2p}( c^{-1}_{\beta}3\|a\|_{ L^{\infty}((0,\infty)\times D ; \mathbb{R})})^p )
\end{equation*}
for every $p\in [1,\infty)$.

\bibliographystyle{abbrv}
\bibliography{RHC-SPDE5}

\end{document}